\pdfoutput=1
\documentclass[11pt]{article}

\usepackage[T1]{fontenc}
\usepackage[utf8]{inputenc}
\usepackage[margin=1in]{geometry}
\usepackage{amsmath,amssymb,amsthm}
\usepackage{mathtools}
\usepackage{booktabs}
\usepackage{longtable}
\usepackage{enumitem}
\usepackage{bm}
\usepackage{graphicx}
\usepackage{url}
\usepackage[round,authoryear]{natbib}
\usepackage[colorlinks=true,linkcolor=blue,citecolor=blue,urlcolor=blue]{hyperref}
\hypersetup{pdftitle={Closed-form solutions to some generalized variational inference problems},pdfauthor={Hien Duy Nguyen and Jacob Westerhout}}
\usepackage{microtype}

\newtheorem{theorem}{Theorem}
\newtheorem{corollary}[theorem]{Corollary}
\newtheorem{lemma}[theorem]{Lemma}

\theoremstyle{definition}
\newtheorem{condition}{Condition}
\theoremstyle{remark}
\newtheorem{remark}[theorem]{Remark}

\newenvironment{keywords}{%
  \par\smallskip\noindent\textbf{Keywords: }\ignorespaces
}{\par\smallskip}

\title{Closed-form solutions to some generalized variational inference problems}

\author{%
Hien Duy Nguyen\\
Department of Mathematics and Physical Science, La Trobe University, Melbourne, Australia\\
Institute of Mathematics for Industry, Kyushu University, Fukuoka, Japan
\and
Jacob Westerhout\\
School of Mathematics and Physics, University of Queensland, Brisbane, Australia
}

\date{}
\begin{document}

\maketitle

\begin{abstract}
The Donsker--Varadhan formula characterizes the ordinary Bayesian posterior as the solution of an unrestricted $\mathsf{KL}$-regularized variational problem.  Generalized variational inference replaces this regularizer by other divergences, but the resulting measure-valued optimization problem is often studied only after restriction to a parametric variational family.  This paper studies the unrestricted measure-level problem.  Given a measurable space $(\mathcal{Z},\mathfrak{Z})$, a prior probability measure $P$, a measurable loss $\ell:\mathcal{Z}\to(-\infty,\infty]$, a regularization strength $\alpha>0$, and a divergence $\mathsf{D}(Q\Vert P)$, we seek probability measures in
\[
\underset{Q\in\mathcal{P}(\mathcal{Z})}{\mathrm{arg\,min}}\left\{\int_{\mathcal{Z}} \ell\,\mathrm{d}Q+\alpha\mathsf{D}(Q\Vert P)\right\}.
\]
For $f$-divergence penalties we derive a scalar inverse-gradient density formula and a one-dimensional dual identity; the Kullback--Leibler, Cressie--Read, and squared-Hellinger problems are treated as examples.  Reverse $f$-divergences and mixed forward/reverse Kullback--Leibler penalties follow from the same separable integral principle.  For Bregman divergences between densities we obtain a density-space solution with a scalar mass multiplier, including least-squares, density-power, and Burg/Itakura--Saito examples.  For R\'enyi penalties of order $r>1$ we derive a normalized truncated-power characterization and a threshold equation for every global optimizer.  Finite model-weight formulas and simple conjugate Bayesian model illustrations show how these closed forms are realized in practice and differ from the traditional solutions.
\end{abstract}

\begin{keywords}
generalized variational inference; $f$-divergence; Bregman divergence; R\'enyi divergence; Gibbs posterior; Donsker--Varadhan formula
\end{keywords}

\section{Introduction}
\label{sec:intro}

Classical Bayesian inference is concerned with the posterior probability measure
\[
Q(A\mid x)=
\frac{\int_A L(x\mid\theta)\,P(\mathrm{d}\theta)}{\int_\Theta L(x\mid\vartheta)\,P(\mathrm{d}\vartheta)},
\qquad A\in\mathfrak{T},
\]
where $(\Theta,\mathfrak{T})$ is a measurable parameter space, $P$ is the prior probability measure, $x$ is the observed datum, and $L(x\mid\theta)$ is the likelihood of $x$ under the parameter value $\theta$.  When there are several observations, $x$ may denote a sample; i.e., $x=(y_1,\ldots,y_N)$.  If $\ell(\theta)=-\log L(x\mid\theta)$ up to an additive constant independent of $\theta$, then this posterior measure is equivalently the solution of
\[
\underset{Q\in\mathcal{P}(\Theta)}{\mathrm{arg\,min}}
\left\{\int_\Theta \ell\,\mathrm{d}Q+\mathsf{KL}(Q\Vert P)\right\}.
\]
This is an instance of the Gibbs/Donsker--Varadhan variational formula \citep[Secs.~4.3--4.4]{polyanskiy2024information}, and it is one of the bridges between Bayesian inference, convex duality, and statistical learning.

Let $(\mathcal{Z},\mathfrak{Z})$ be a measurable space and let $P\in\mathcal{P}(\mathcal{Z})$ be a prior probability measure.  For a measurable loss $\ell:\mathcal{Z}\to(-\infty,\infty]$, a divergence $\mathsf{D}$, and $\alpha>0$, the generalized variational inference (GVI) problem considered here has the following possibly empty or non-singleton solution set
\begin{equation}
\label{eq:master-intro}
\underset{Q\in\mathcal{P}(\mathcal{Z})}{\mathrm{arg\,min}}
\left\{\int_{\mathcal{Z}}\ell\,\mathrm{d}Q+\alpha\mathsf{D}(Q\Vert P)\right\}.
\end{equation}
In generalized Bayes, $\ell$ may be any loss connecting the parameter to the data.  When $\mathsf{D}(Q\Vert P)=\mathsf{KL}(Q\Vert P)$, the solution satisfies $\mathrm{d}Q^\star/\mathrm{d}P\propto \exp(-\ell/\alpha)$.  The parameter $\alpha$ therefore plays the role of a temperature: increasing $\alpha$ weakens the loss tilt and keeps the solution closer to the prior, while decreasing $\alpha$ concentrates more strongly near small-loss states.

The aim of this paper is to identify classes of divergences for which \eqref{eq:master-intro} has an exact density-form solution.  We prove three main results.  First, for differentiable strictly convex $f$-divergences, the solution is obtained by applying the inverse derivative of $f$ to a shifted and rescaled loss, followed by one scalar normalization.  Secondly, for separable Bregman divergences between densities, the solution is a mirror-descent step in density space, again with one scalar mass multiplier.  Here separable means that, after fixing a dominating measure, the divergence is the integral of a pointwise function of the candidate density and the prior density, as in the Bregman-distance class studied by \citet{csiszar1995generalized}.  Thirdly, for the logarithmic $\mathcal{L}^r(P)$ form of the R\'enyi divergence with $r>1$, every global optimizer has a normalized positive-part power-law form.  These formulas make exact GVI solutions available as mathematical objects in their own right, rather than only as targets for approximate variational optimization.

The relevant variational identities have a substantial history.  $\mathsf{KL}$-regularized solutions appear in minimization views of Bayesian inference and loss-based Bayes \citep{walker2006bayesian,bissiri2016general}.  $\mathsf{KL}$-regularized estimators also underlie information-theoretic and PAC-Bayes bounds \citep{zhang2006information,alquier2024pacbayes}.  The optimization-centric formulation of Bayesian and generalized variational inference was developed systematically by \citet{knoblauch2022optimization}; see also the machine-learning exposition of \citet[Chs.~10--12]{simeone2023machine}.  In contrast with works that primarily study computational restrictions or variational families, including GVI and related variational inference methods \citep{knoblauch2022optimization,simeone2023machine}, the present paper keeps the feasible set equal to the full space of probability measures.

For $f$-divergences, foundational sources include \citet{ali1966general} and \citet{csiszar1967information}, with modern variational forms appearing in divergence estimation and learning \citep{broniatowski2009parametric,nguyen2010estimating,ruderman2012tighter,nowozin2016fgan}.  The closest antecedent for the $f$-divergence part is the generalized Gibbs formula of \citet{birrell2022fgamma}.  If $f^*$ denotes the Fenchel conjugate, then under the hypotheses of \citet{birrell2022fgamma} one has
\[
\Lambda_f^P[g]
=
\inf_{\nu\in\mathbb{R}}\left\{\nu+\int_{\mathcal{Z}} f^*(g-\nu)\,\mathrm{d}P\right\}
=
\sup_Q\left\{\int_{\mathcal{Z}}g\,\mathrm{d}Q-\mathsf{D}_f(Q\Vert P)\right\}.
\]
When the infimum is attained at $\nu^\star$ and $f^*$ is differentiable, the optimizing measure has density $(f^*)'(g-\nu^\star)$, where $(f^*)'$ denotes the derivative of the scalar convex conjugate $f^*$.  Under strict convexity and differentiability, this derivative is the inverse-gradient map $(f')^{-1}$ on the relevant range.  Our $f$-divergence theorem is the loss-minimization version of this identity with $g=-\ell/\alpha$, together with explicit solution expressions.

Bregman divergences originate in the projection method of \citet{bregman1967relaxation} and are closely tied to Legendre-function Bregman projections \citep{bauschke1997legendre}, mirror descent \citep{beck2003mirror}, and convex integral minimization under linear constraints \citep{broniatowski2006minimization,rockafellar1974conjugate,borwein2006convex}; see also \citet{csiszar1995generalized} for generalized projections of non-negative functions using both $f$-divergence and Bregman-type distances.   Finite-dimensional analogues include sparsemax and entmax maps, which are regularized simplex prediction maps induced by Euclidean or Tsallis-type penalties \citep{martins2016sparsemax,martins2022sparse}.

R\'enyi divergences originate with \citet{renyi1961measures} and have their own Donsker--Varadhan-type variational theory \citep{atar2015robust,anantharam2018variational,birrell2021renyi}.  These works characterize or estimate R\'enyi divergences.  The problem studied here is different: a linear expected loss plus a logarithmic $\mathcal{L}^r(P)$ penalty over $\mathrm{d}Q/\mathrm{d}P$.  The resulting optimizer is not exponential but a positive-part power law with a threshold equation.

Two examples make the formulas concrete.  First, a finite model list turns the measure-level problem into explicit generalized posterior model weights, recovering ordinary Bayesian model averaging in the $\mathsf{KL}$ case.  Secondly, two conjugate models illustrate the density shapes produced by the different divergence regularizers.  

\paragraph{Contributions.}
We provide exact solutions for four classes of examples: $f$-divergences, including $\mathsf{KL}$, Cressie--Read cases such as Pearson $\chi^2$, reverse $f$-divergence, reverse $\mathsf{KL}$, and mixed-direction $\mathsf{KL}$ cases; separable Bregman divergences, including least-squares, density-power, and Burg/Itakura--Saito cases; R\'enyi regularization with $r>1$; and finite model-weight examples together with density-shape illustrations in standard conjugate models.  All results are stated for general measurable spaces, except the Bregman section, which fixes a dominating measure for the two densities.  Complete proofs of all results appear in the supplementary material.

\section{Technical preliminaries}
\label{sec:prelim}

\subsection{Notation and standing assumptions}
\label{subsec:notation}

\begin{condition}[Measurable-space hypothesis]
\label{cond:basic}
Let $(\mathcal{Z},\mathfrak{Z})$ be a measurable space and let $P\in\mathcal{P}(\mathcal{Z})$ be fixed.  The loss $\ell:\mathcal{Z}\to(-\infty,\infty]$ is measurable, belongs to $\mathcal{L}^1(P)$, and is bounded below $P$-almost surely.
\end{condition}

For two probability measures $Q$ and $P$ on $(\mathcal{Z},\mathfrak{Z})$, the notation $Q\ll P$ means that $P(A)=0$ implies $Q(A)=0$ for every $A\in\mathfrak{Z}$.  In this case $u=\mathrm{d}Q/\mathrm{d}P$ denotes the Radon--Nikodym derivative, so $\int_{\mathcal{Z}}\ell\,\mathrm{d}Q=\int_{\mathcal{Z}}\ell u\,\mathrm{d}P$ and $\int_{\mathcal{Z}}u\,\mathrm{d}P=1$.
We write $a_+=\max\{a,0\}$.  An integral of a measurable function is allowed to take the value $+\infty$; it is interpreted as an extended integral in $(-\infty,\infty]$ whenever the negative part is finite.  Boundedness below is enough to make $\int_{\mathcal{Z}}\ell\,\mathrm{d}Q$ an extended real number with finite negative part for every $Q\ll P$, and it also makes the $\mathsf{KL}$ normalizer finite.  The additional assumption $\ell\in\mathcal{L}^1(P)$ is a simplifying nondegeneracy condition ensuring that the objective is finite at the reference measure $Q=P$.  Several results can be stated under weaker hypotheses, but Condition~\ref{cond:basic} keeps the exposition uniform.

A function $g:\mathbb{R}\to(-\infty,\infty]$ is proper if it is not identically $\infty$.  It is lower semicontinuous (LSC) if all sets $\{t:g(t)\le c\}$ are closed.  Its Fenchel conjugate is $g^*(y)=\sup_{t\in\mathbb{R}}\{ty-g(t)\}$, $y\in\mathbb{R}$.
For a proper convex function $g$, a real number $y$ is a subgradient of $g$ at $t$ if $g(s)\ge g(t)+y(s-t)$ for every $s\in\mathbb{R}$; the set of all such subgradients is denoted by $\partial g(t)$.  We use the Fenchel--Young inequality $g(t)+g^*(y)\ge ty$, with equality if and only if $y\in\partial g(t)$; compare Lemma~\ref{lem:app-fenchel}.  These are standard facts of convex optimization; see, for example, \citet[Prop.~3.3.4]{borwein2006convex}.  For the affine mass constraint $\int_{\mathcal{Z}}u\,\mathrm{d}P=1$, the corresponding Lagrange multiplier is represented by a constant $\lambda$, since $\lambda\int_{\mathcal{Z}}(v-u)\,\mathrm{d}P=0$ for all feasible $u$ and $v$.  The scalar monotonicity fact used to identify such multipliers is stated as Lemma~\ref{lem:normalizer-main} in the supplementary material.

\section[f-divergence regularization]{$f$-divergence regularization}
\label{sec:fdiv}

\subsection{General solution}
\label{subsec:fdiv-general}

Let $f:[0,\infty)\to(-\infty,\infty]$ be proper, LSC, convex, finite and differentiable on $(0,\infty)$, and satisfy $f(1)=0$ and $f(t)\ge0$, with equality only at $t=1$.  Extend $f(t)=\infty$ for $t<0$ and define
\begin{equation}
\label{eq:fdiv-def}
\mathsf{D}_f(Q\Vert P)=
\begin{cases}
\displaystyle \int_{\mathcal{Z}} f\!\left(\frac{\mathrm{d} Q}{\mathrm{d} P}\right)\,\mathrm{d}P, & Q\ll P,\\[0.5em]
\infty, & \text{otherwise.}
\end{cases}
\end{equation}
Under this convention, any probability measure $Q$ that is not absolutely continuous with respect to $P$ has infinite divergence.  It is the usual convention for the examples in this paper; more general conventions for singular parts are discussed by \citet[Sec.~2]{breuer2016measuring} and \citet[Sec.~2]{birrell2022fgamma}.  With $u=\mathrm{d}Q/\mathrm{d}P$, the optimization problem is
\begin{equation}
\label{eq:f-primal}
  \inf_{u\in\mathcal{U}}\int_{\mathcal{Z}}\{\ell u+\alpha f(u)\}\,\mathrm{d}P,
  \qquad \mathcal{U}=\left\{u:\mathcal{Z}\to[0,\infty)\text{ measurable}:\int_{\mathcal{Z}}u\,\mathrm{d}P=1\right\}.
\end{equation}
For the explicit inverse-gradient formula we assume, in addition, that $f$ is strictly convex on $(0,\infty)$, $f'_+(0)=\lim_{t\downarrow0}f'(t)\in[-\infty,\infty)$, and $f'_-(\infty)=\lim_{t\uparrow\infty}f'(t)=\infty$.

\begin{theorem}[$f$-divergence solution]
\label{thm:f-main}
Assume Condition~\ref{cond:basic} and the hypotheses on $f$ stated in this subsection.  For $\lambda\in\mathbb{R}$ define
\begin{equation}
\label{eq:u-lambda-main}
 u_\lambda(z)=\underset{t\ge0}{\mathrm{arg\,min}}\{\alpha f(t)+(\ell(z)+\lambda)t\}.
\end{equation}
Then $u_\lambda$ is measurable and is given by
\begin{equation}
\label{eq:u-inverse-main}
 u_\lambda(z)=
\begin{cases}
0, & -\dfrac{\ell(z)+\lambda}{\alpha}\le f'_+(0),\\[0.8em]
(f')^{-1}\!\left(-\dfrac{\ell(z)+\lambda}{\alpha}\right), & -\dfrac{\ell(z)+\lambda}{\alpha}> f'_+(0),
\end{cases}
\end{equation}
with the convention that the first branch is void when $f'_+(0)=-\infty$.  There is a unique $\lambda^\star\in\mathbb{R}$ satisfying
\begin{equation}
\label{eq:f-normalization-main}
\int_{\mathcal{Z}}u_{\lambda^\star}\,\mathrm{d}P=1.
\end{equation}
The unique minimizer of \eqref{eq:f-primal} is $u^\star=u_{\lambda^\star}$; equivalently, the solution of \eqref{eq:master-intro} with $\mathsf{D}=\mathsf{D}_f$ is the probability measure $Q^\star$ satisfying $\mathrm{d}Q^\star/\mathrm{d}P=u_{\lambda^\star}$.
\end{theorem}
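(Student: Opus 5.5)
The proof has three steps: a pointwise scalar minimization, a monotonicity argument for the normalizer, and a Lagrangian/Fenchel--Young verification of optimality.

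\emph{Step 1: the pointwise problem.} Fix $z$ with $\ell(z)<\infty$ and put $c=\ell(z)+\lambda$. The scalar function $\phi(t)=\alpha f(t)+ct$ on $[0,\infty)$ is LSC and strictly convex on $(0,\infty)$. Since $f'_-(\infty)=\infty$, we have $\phi(t)\to\infty$ as $t\to\infty$, so a minimizer exists, and strict convexity makes it unique. Writing the optimality condition as $0\in\alpha\partial f(t)+c$ together with the boundary condition at $0$ gives the two branches of \eqref{eq:u-inverse-main}:
\begin{itemize}
\item if $-c/\alpha\le f'_+(0)$, then $\phi'\ge0$ on $(0,\infty)$ and the minimizer is $t=0$;
\item otherwise $-c/\alpha$ lies in the range $(f'_+(0),\infty)$ of the continuous, strictly increasing map $f'$, and the minimizer is $(f')^{-1}(-c/\alpha)$.
\end{itemize}
On $\{\ell=\infty\}$ the minimizer is $0$; this set is $P$-null because $\ell\in\mathcal{L}^1(P)$. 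Measurability of $u_\lambda$ follows because it is a composition of the measurable map $\ell$ with the monotone, hence Borel, map $s\mapsto\max\{0,\,(f')^{-1}(-s/\alpha)\}$, extended by $0$.

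\emph{Step 2: existence and uniqueness of $\lambda^\star$.} Set $m(\lambda)=\int u_\lambda\,\mathrm{d}P$. The map $\lambda\mapsto u_\lambda(z)$ is nonincreasing and continuous, and it is strictly decreasing wherever $u_\lambda(z)>0$. I would use the normalizer lemma (Lemma~\ref{lem:normalizer-main}) to get the following properties of $m$:
\begin{itemize}
\item \emph{Finiteness.} Since $\ell\ge -M$ $P$-a.s., we have $u_\lambda\le(f')^{-1}\bigl((M-\lambda)/\alpha\bigr)$ whenever that quantity is defined, so $m(\lambda)<\infty$ for all $\lambda$.
\item \emph{Continuity.} Dominated convergence on half-lines $[\lambda_0,\infty)$ shows $m$ is continuous.
\item \emph{Limit at $+\infty$.} As $\lambda\to\infty$, $m(\lambda)\to0$ by dominated convergence, because $(f')^{-1}(s)\to0$ or hits the zero branch as $s\to-\infty$ or $s\downarrow f'_+(0)$.
\item \emph{Limit at $-\infty$.} As $\lambda\to-\infty$, $m(\lambda)\to\infty$ by monotone convergence, since $u_\lambda\uparrow\infty$ on $\{\ell<\infty\}$, which has full $P$-measure.
\end{itemize}
The intermediate value theorem then gives some $\lambda^\star$ with $m(\lambda^\star)=1$. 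For uniqueness: if $m(\lambda_1)=m(\lambda_2)=1$ with $\lambda_1<\lambda_2$, then $u_{\lambda_1}\ge u_{\lambda_2}$, and the inequality is strict on $\{u_{\lambda_1}>0\}$, which has positive $P$-measure. This gives $m(\lambda_1)>m(\lambda_2)$, a contradiction.

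\emph{Step 3: optimality and uniqueness of the minimizer.} For any $u\in\mathcal{U}$, the pointwise minimality of $u^\star=u_{\lambda^\star}$ gives
\[
\ell u+\alpha f(u)+\lambda^\star u\;\ge\;\ell u^\star+\alpha f(u^\star)+\lambda^\star u^\star .
\]
Integrating against $P$ and using $\int u\,\mathrm{d}P=\int u^\star\,\mathrm{d}P=1$ cancels the $\lambda^\star$ terms, so $u^\star$ attains the infimum in \eqref{eq:f-primal}.

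Justifying this integration is the main obstacle, because the terms may be infinite or of indefinite sign. I would handle it as follows:
\begin{itemize}
\item The right-hand side has finite negative part because $f\ge0$ and $\ell$ is bounded below.
\item It is also finite above: comparing with $t=1$ pointwise gives $\ell u^\star+\alpha f(u^\star)+\lambda^\star u^\star\le\ell+\lambda^\star$, which is integrable. Hence the objective at $u^\star$ is finite.
\item The left-hand side then has well-defined integral in $(-\infty,\infty]$.
\end{itemize}

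For uniqueness, suppose $u$ is another minimizer. Then equality must hold pointwise $P$-a.e., and uniqueness of the scalar argmin from Step 1 forces $u=u^\star$ $P$-a.e. Finally, every $Q\not\ll P$ has infinite objective, so $Q^\star$ is the unique solution of \eqref{eq:master-intro}.
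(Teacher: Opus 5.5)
Your proposal is correct and follows the paper's proof essentially step for step: the same pointwise scalar minimization, the same normalizer argument via Lemma~\ref{lem:normalizer-main}, and an optimality step that is the paper's Fenchel--Young weak-duality argument written directly. The two coincide because $-\alpha f^*\bigl(-(\ell+\lambda^\star)/\alpha\bigr)$ is exactly the pointwise minimum value; the paper's phrasing also yields the dual identity of Corollary~\ref{cor:f-dual-main}, and your comparison with $t=1$ to show the objective at $u^\star$ is finite makes explicit a detail the paper leaves implicit.
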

\begin{proof}
See Supplement~\ref{supp:proof-f-main}.
\end{proof}

\begin{corollary}[$f$-divergence dual identity]
\label{cor:f-dual-main}
Under the hypotheses of Theorem~\ref{thm:f-main},
\begin{equation}
\label{eq:f-dual-main}
\inf_{u\in\mathcal{U}}\int_{\mathcal{Z}}\{\ell u+\alpha f(u)\}\,\mathrm{d}P
=\sup_{\lambda\in\mathbb{R}}\left\{-\lambda-\alpha\int_{\mathcal{Z}} f^*\!\left(-\frac{\ell+\lambda}{\alpha}\right)\,\mathrm{d}P\right\}.
\end{equation}
The supremum is attained at the normalizing multiplier $\lambda^\star$ from Theorem~\ref{thm:f-main}.
\end{corollary}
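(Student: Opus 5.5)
The plan is to prove weak duality by Fenchel--Young and then show the gap closes at $\lambda^\star$ using Theorem~\ref{thm:f-main}.

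\emph{Weak duality.} Fix $u\in\mathcal{U}$ and $\lambda\in\mathbb{R}$. Apply Fenchel--Young pointwise with $t=u(z)$ and $y=-(\ell(z)+\lambda)/\alpha$:
\[
f(u)+f^*\!\left(-\tfrac{\ell+\lambda}{\alpha}\right)\ge -\tfrac{(\ell+\lambda)u}{\alpha}.
\]
Multiplying by $\alpha$ and rearranging gives
\[
\ell u+\alpha f(u)\ge -\lambda u-\alpha f^*\!\left(-\tfrac{\ell+\lambda}{\alpha}\right)
\]
$P$-almost everywhere. Integrating and using $\int u\,\mathrm{d}P=1$ yields
\[
\int\{\ell u+\alpha f(u)\}\,\mathrm{d}P\ge-\lambda-\alpha\int f^*\!\left(-\tfrac{\ell+\lambda}{\alpha}\right)\mathrm{d}P .
\]
Taking the infimum over $u$ and the supremum over $\lambda$ gives ``$\ge$'' in \eqref{eq:f-dual-main}.

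For this step to be legitimate, the integral of $f^*$ must be well defined in $(-\infty,\infty]$. Since $f(1)=0$, we have $f^*(y)\ge y$, so the negative part of $f^*(-(\ell+\lambda)/\alpha)$ is dominated by $|\ell+\lambda|/\alpha\in\mathcal{L}^1(P)$ by Condition~\ref{cond:basic}. If the $f^*$ integral is $+\infty$, the inequality is trivial. On the primal side, $\ell$ is bounded below and $f\ge0$, so the left-hand integral has finite negative part. Note also that $\ell<\infty$ $P$-a.s.\ because $\ell\in\mathcal{L}^1(P)$.

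\emph{Equality at $\lambda^\star$.} By \eqref{eq:u-lambda-main}, $u_\lambda(z)$ minimizes $t\mapsto\alpha f(t)+(\ell(z)+\lambda)t$ over $t\ge0$, and hence over $\mathbb{R}$, since $f=\infty$ on $(-\infty,0)$. Therefore
\[
\alpha f(u_\lambda)+(\ell+\lambda)u_\lambda=-\alpha\sup_t\left\{-\tfrac{\ell+\lambda}{\alpha}t-f(t)\right\}=-\alpha f^*\!\left(-\tfrac{\ell+\lambda}{\alpha}\right).
\]
This is exactly the Fenchel--Young equality case. Now take $\lambda=\lambda^\star$, so that $\int u_{\lambda^\star}\,\mathrm{d}P=1$ by \eqref{eq:f-normalization-main}, and integrate. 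The pointwise identity becomes
\[
\int\{\ell u^\star+\alpha f(u^\star)\}\,\mathrm{d}P=-\lambda^\star-\alpha\int f^*\!\left(-\tfrac{\ell+\lambda^\star}{\alpha}\right)\mathrm{d}P .
\]
The left side is the primal infimum by Theorem~\ref{thm:f-main}. The right side is a dual value, so it cannot exceed the supremum. Combined with weak duality, this gives equality in \eqref{eq:f-dual-main} and shows the supremum is attained at $\lambda^\star$.

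\emph{Main obstacle.} The delicate point is splitting the integral of $\ell u+\lambda u+\alpha f(u)$ into separate pieces, since no $\infty-\infty$ may occur. I would first show the primal value is finite: testing $u\equiv1$ gives value $\int\ell\,\mathrm{d}P<\infty$. Hence, at the optimizer, $\int\alpha f(u^\star)\,\mathrm{d}P$ and $\int\ell u^\star\,\mathrm{d}P$ are both finite, because each is bounded below and their sum is finite. The term $\lambda^\star u^\star$ is integrable. Consequently the $f^*$ integral at $\lambda^\star$ is finite, and all the manipulations above are valid.
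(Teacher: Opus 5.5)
Your proposal is correct and follows essentially the same route as the paper: weak duality from the pointwise Fenchel--Young inequality with $y=-(\ell+\lambda)/\alpha$ together with the mass constraint, and equality at $\lambda^\star$ because $u_{\lambda^\star}(z)$ attains the supremum defining $f^*$ pointwise. Your extra bookkeeping (the bound $f^*(y)\ge y$ from $f(1)=0$, and finiteness of the objective at $u^\star$) supplies integrability details the paper leaves implicit. Your direct computation of the equality case also avoids the paper's informal one-sided subgradient interpretation at $u=0$.
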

\begin{proof}
See Supplement~\ref{supp:proof-f-main}.
\end{proof}

\subsection{Examples}
\label{subsec:f-examples}

\paragraph{Kullback--Leibler divergence.}
For $f(t)=t\log t-t+1$, $f'(t)=\log t$.  Since $\ell$ is bounded below, $Z_\alpha=\int_{\mathcal{Z}}\exp(-\ell/\alpha)\,\mathrm{d}P<\infty$, and
\begin{equation}
\label{eq:kl-main}
  \frac{\mathrm{d}Q^\star}{\mathrm{d}P}(z)=\frac{\exp(-\ell(z)/\alpha)}{Z_\alpha}.
\end{equation}
The optimal value is $-\alpha\log Z_\alpha$, recovering the Gibbs/Donsker--Varadhan formula.

\paragraph{Cressie--Read power divergences and Pearson $\chi^2$.}
For $\gamma\in\mathbb{R}\setminus\{0,1\}$, the Cressie--Read family may be written, up to the usual normalization convention, as
\[
  f_\gamma(t)=\frac{t^\gamma-\gamma t+\gamma-1}{\gamma(\gamma-1)},
  \qquad t>0.
\]
The limiting cases $\gamma\to1$ and $\gamma\to0$ give forward and reverse Kullback--Leibler divergences, and particular choices give the Pearson $\chi^2$, modified $\chi^2$, and Hellinger cases; see \citet{cressie1984multinomial} and \citet[Sec.~1.1]{broniatowski2009parametric}.  For $\gamma>1$, the inverse-gradient formula gives
\[
  u_\lambda(z)=
  \left(1-\frac{(\gamma-1)(\ell(z)+\lambda)}{\alpha}\right)_+^{1/(\gamma-1)},
  \qquad \int_{\mathcal{Z}}u_{\lambda^\star}\,\mathrm{d}P=1,
\]
with the unique normalizing multiplier.  The unscaled Pearson generator used here is $f(t)=(t-1)^2$, equal to $2f_2(t)$ in the preceding convention, and therefore
\begin{equation}
\label{eq:chi-main}
  u_\lambda(z)=\left(1-\frac{\ell(z)+\lambda}{2\alpha}\right)_+,
  \qquad \int_{\mathcal{Z}} u_{\lambda^\star}\,\mathrm{d}P=1.
\end{equation}
Thus $Q^\star$ is a clipped affine reweighting of $P$.  This finite-dimensional form is the same thresholding mechanism that appears in distributionally robust optimization (DRO) with $f$-divergence balls \citep{namkoong2016stochastic}.

\paragraph{Reverse $f$-divergences.}
\label{subsec:reverse-mixed-main}
Reverse $f$-divergence regularization is also separable after a change of generator.  Let $r=\mathrm{d}Q/\mathrm{d}P$ and assume $Q\ll P$ and $P\ll Q$.  Then
\begin{equation}
\label{eq:perspective-main}
  \mathsf{D}_f(P\Vert Q)=\int_{\mathcal{Z}} f^\diamond(r)\,\mathrm{d}P,
  \qquad f^\diamond(r)=r f(1/r),\quad r>0.
\end{equation}
The map $f^\diamond$ is the one-dimensional perspective transform of $f$ and is convex; the convexity of perspective functions is a standard fact of convex analysis \citep[Sec.~2.3]{borwein2006convex}.  If $f^\diamond$ is strictly convex and differentiable and the corresponding scalar minimizer can be normalized, any minimizer of
\[
\underset{Q:Q\ll P,\,P\ll Q}{\mathrm{arg\,min}}\left\{\int_{\mathcal{Z}}\ell\,\mathrm{d}Q+\alpha \mathsf{D}_f(P\Vert Q)\right\}
\]
has likelihood ratio
\begin{equation}
\label{eq:reverse-f-main}
  r^\star(z)=\left((f^\diamond)'\right)^{-1}\!\left(-\frac{\ell(z)+\lambda^\star}{\alpha}\right),
  \qquad \int_{\mathcal{Z}} r^\star\,\mathrm{d}P=1.
\end{equation}
For reverse $\mathsf{KL}$, $f(t)=t\log t-t+1$ gives $f^\diamond(r)=-\log r+r-1$.  For any normalizing multiplier with $\alpha+\ell+\lambda^\star>0$ $P$-a.s.,
\begin{equation}
\label{eq:reverse-kl-main}
  r^\star(z)=\frac{\alpha}{\alpha+\ell(z)+\lambda^\star}.
\end{equation}

\paragraph{Mixed-direction $\mathsf{KL}$.}
For a mixed penalty $a\mathsf{KL}(Q\Vert P)+b\mathsf{KL}(P\Vert Q)$ with $a,b>0$, the effective generator is
\[
  f_{a,b}(r)=a(r\log r-r+1)+b(-\log r+r-1),
  \qquad f'_{a,b}(r)=a\log r+b(1-1/r).
\]
Inverting $f'_{a,b}$ gives a closed form in the principal branch of the Lambert $W$ function.  Here $\mathrm{W}$ denotes the principal real branch of the inverse of $w\mapsto w e^w$ on $[0,\infty)$ \citep[Sec.~4.13, Eq.~4.13.1]{olver2010nist}:
\begin{equation}
\label{eq:mixed-kl-main}
  r_\lambda(z)=\frac{b/a}{\mathrm{W}\!\left((b/a)\exp\left[b/a+\frac{\ell(z)+\lambda}{\alpha a}\right]\right)},
  \qquad \int_{\mathcal{Z}} r_{\lambda^\star}\,\mathrm{d}P=1.
\end{equation}
For $a=b=1$ the penalty is the Jeffreys divergence \citep{jeffreys1946invariant},
\[
  \mathsf{J}(Q,P)=\mathsf{KL}(Q\Vert P)+\mathsf{KL}(P\Vert Q),
\]
and \eqref{eq:mixed-kl-main} gives its closed-form likelihood ratio.  The derivations are given in Supplement~\ref{supp:reverse-mixed}.

\subsection{Squared-Hellinger divergence}
\label{subsec:hellinger-main}

For $f(t)=(\sqrt t-1)^2$, the derivative $f'(t)=1-t^{-1/2}$ has finite limit $f'_-(\infty)=1$.  Thus Theorem~\ref{thm:f-main} does not apply, because the scalar objective may fail to be coercive unless the shifted loss is sufficiently positive.  Suppose there is $\lambda^\star$ such that $\alpha+\ell+\lambda^\star>0$ $P$-a.s. and
\[
  \int_{\mathcal{Z}}\left(\frac{\alpha}{\alpha+\ell+\lambda^\star}\right)^2\,\mathrm{d}P=1.
\]
Then the unique minimizer is
\begin{equation}
\label{eq:hell-main}
  \frac{\mathrm{d}Q^\star}{\mathrm{d}P}(z)=\left(\frac{\alpha}{\alpha+\ell(z)+\lambda^\star}\right)^2.
\end{equation}
The positivity condition is not vacuous: if $\alpha+\ell(z)+\lambda\le0$ on a set of positive $P$-mass, the pointwise scalar objective is not bounded below along large values of the density on that set.  The derivation is given in Supplement~\ref{supp:f-examples}.

\section{Bregman divergence regularization}
\label{sec:bregman}

\subsection{General solution}
\label{subsec:bregman-general}

\begin{condition}[Dominating measure for Bregman divergences]
\label{cond:bregman}
Let $\mu$ be a $\sigma$-finite measure on $(\mathcal{Z},\mathfrak{Z})$.  Assume $P\ll\mu$ with density $p=\mathrm{d}P/\mathrm{d}\mu$, and consider candidates $Q\ll\mu$ with density $q=\mathrm{d}Q/\mathrm{d}\mu$.  Assume that $\ell$ is finite and bounded below $\mu$-almost surely.
\end{condition}

Let $I$ be either $[0,\infty)$ or $(0,\infty)$.  Let $\psi:I\to(-\infty,\infty]$ be proper, LSC, strictly convex, and differentiable on $(0,\infty)$.  Assume $\psi'_-(\infty)=\infty$.  If $I=(0,\infty)$, assume also $p>0$ $\mu$-a.e. and $\psi'_+(0)=-\infty$; this is the barrier-domain case.  If $I=[0,\infty)$, assume $\psi'_+(0)>-\infty$, put $\psi'_+(0)=\lim_{t\downarrow0}\psi'(t)$, and interpret $\psi'(p)$ as $\psi'_+(0)$ on the set where $p=0$.  Define
\begin{equation}
\label{eq:breg-def-main}
  \mathsf{D}_\psi(Q\Vert P)=\int_{\mathcal{Z}}\{\psi(q)-\psi(p)-\psi'(p)(q-p)\}\,\mathrm{d}\mu.
\end{equation}
Keeping only the pointwise terms in the objective that depend on $q$ gives the reduced problem
\begin{equation}
\label{eq:breg-primal-main}
  \inf_{q:\int_{\mathcal{Z}} q\,\mathrm{d}\mu=1}\int_{\mathcal{Z}}\{\alpha\psi(q)+(\ell-\alpha\psi'(p))q\}\,\mathrm{d}\mu,
\end{equation}
with $q(z)\in I$ $\mu$-a.e.  This reduced formulation is obtained from \eqref{eq:breg-def-main} by omitting pointwise terms that do not depend on $q$.  Indeed, expanding \eqref{eq:breg-def-main} in the original objective gives \eqref{eq:breg-primal-main} plus the constant $\alpha\int_{\mathcal{Z}}\{\psi'(p)p-\psi(p)\}\,\mathrm{d}\mu$, which is independent of the candidate density $q$.  When this constant is finite, adding or removing it leaves the set of minimizers unchanged. The proof again uses the scalar Lagrange multiplier for the mass constraint and pointwise minimization of the separable integrand.

\begin{theorem}[Bregman density solution]
\label{thm:breg-main}
Assume Conditions~\ref{cond:basic} and~\ref{cond:bregman}, and the hypotheses on $I$ and $\psi$, above.  For $\lambda\in\mathbb{R}$ define
\begin{equation}
\label{eq:q-lambda-main}
  q_\lambda(z)=\underset{t\in I}{\mathrm{arg\,min}}\{\alpha\psi(t)+(\ell(z)-\alpha\psi'(p(z))+\lambda)t\}.
\end{equation}
Then $q_\lambda$ is measurable.  If $\lambda^\star$ satisfies $\int_{\mathcal{Z}} q_{\lambda^\star}\,\mathrm{d}\mu=1$ and the reduced objective in \eqref{eq:breg-primal-main} is finite at $q_{\lambda^\star}$, then $q^\star=q_{\lambda^\star}$ is the unique minimizer of \eqref{eq:breg-primal-main}.  At points $z$ for which the scalar minimizer in \eqref{eq:q-lambda-main} lies in the open interval $(0,\infty)$,
\begin{equation}
\label{eq:breg-inverse-main}
  q_\lambda(z)=(\psi')^{-1}\!\left(\psi'(p(z))-\frac{\ell(z)+\lambda}{\alpha}\right).
\end{equation}
If $I=[0,\infty)$, this is clipped at zero:
\begin{equation}
\label{eq:breg-clip-main}
q_\lambda(z)=
\begin{cases}
0, & \psi'(p(z))-\dfrac{\ell(z)+\lambda}{\alpha}\le\psi'_+(0),\\[0.8em]
(\psi')^{-1}\!\left(\psi'(p(z))-\dfrac{\ell(z)+\lambda}{\alpha}\right),&\text{otherwise.}
\end{cases}
\end{equation}
If the constant $\alpha\int_{\mathcal{Z}}\{\psi'(p)p-\psi(p)\}\,\mathrm{d}\mu$ is finite, then the probability measure with density $q^\star$ is also the unique minimizer of the original Bregman-regularized objective.
\end{theorem}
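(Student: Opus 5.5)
The argument is the same pointwise-minimization-plus-multiplier argument as for Theorem~\ref{thm:f-main}, now applied to the integrand $\phi_z(t)=\alpha\psi(t)+c_\lambda(z)t$, where $c_\lambda(z)=\ell(z)-\alpha\psi'(p(z))+\lambda$.

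\emph{Step 1: the scalar problem.} Fix $z$ with $\ell(z)$ and $p(z)$ finite, so that $c_\lambda(z)\in\mathbb{R}$ (on $\{p=0\}$ use the convention $\psi'(0)=\psi'_+(0)$). The function $\phi_z$ is strictly convex on $I$. Because $\psi'_-(\infty)=\infty$, we have $\phi_z'(t)\to\infty$, so $\phi_z$ is coercive, and a minimizer exists and is unique. An interior minimizer $t\in(0,\infty)$ must satisfy $\alpha\psi'(t)+c_\lambda(z)=0$, which gives \eqref{eq:breg-inverse-main}, since $\psi'$ is strictly increasing and continuous on $(0,\infty)$ and hence invertible onto its range.

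\emph{Step 2: the boundary and the two domains.} In the barrier case $I=(0,\infty)$, the condition $\psi'_+(0)=-\infty$ together with $\psi'_-(\infty)=\infty$ makes the range of $\psi'$ all of $\mathbb{R}$. The minimizer is then always interior. For $I=[0,\infty)$, the minimizer is $0$ exactly when the right derivative at $0$ is nonnegative, i.e.\ $\alpha\psi'_+(0)+c_\lambda(z)\ge0$. This is equivalent to the first branch of \eqref{eq:breg-clip-main}.

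\emph{Step 3: measurability.} The formulas in Steps 1--2 write $q_\lambda$ as a composition of the continuous map $(\psi')^{-1}$ (or the constant $0$) with the measurable function $\psi'(p)-(\ell+\lambda)/\alpha$, taken on a measurable partition of $\mathcal{Z}$. Here $\psi'$ is a monotone limit of difference quotients, so it is Borel, and therefore $q_\lambda$ is measurable.

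\emph{Step 4: optimality.} Let $q$ be any feasible density with $\int q\,\mathrm{d}\mu=1$ and $q\in I$ a.e. Pointwise minimality gives
\[
\alpha\psi(q)+(\ell-\alpha\psi'(p))q \;\ge\; \alpha\psi(q_{\lambda^\star})+(\ell-\alpha\psi'(p))q_{\lambda^\star}+\lambda^\star\,(q_{\lambda^\star}-q).
\]
We integrate this over $\mathcal{Z}$; the multiplier term vanishes because both densities have mass one.

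This integration is where the main obstacle lies. The integrals are extended-valued, so we must justify splitting $\int(A+B)=\int A+\int B$. I would handle it as follows.
\begin{itemize}
\item Assume the objective at $q$ is finite; otherwise there is nothing to prove.
\item The objective at $q^\star$ is finite by hypothesis.
\item The difference of the two integrands is therefore a pointwise inequality between a function whose integral is finite and one whose integral is $\ge$ that, after subtracting the $\mu$-integrable term $\lambda^\star(q^\star-q)$.
\item Since $q$ and $q^\star$ are both in $\mathcal{L}^1(\mu)$, this last term is integrable, and linearity applies.
\end{itemize}

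\emph{Step 5: uniqueness and the original problem.} If $q$ attains the same value, the integrated inequality is an equality. Then the pointwise inequality must be an equality $\mu$-a.e., and the uniqueness of the scalar minimizer forces $q=q^\star$ a.e.

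Finally, when the constant $\alpha\int\{\psi'(p)p-\psi(p)\}\,\mathrm{d}\mu$ is finite, the original objective equals the reduced objective plus this constant. Here $\int\ell q\,\mathrm{d}\mu=\int\ell\,\mathrm{d}Q$, because $q$ is the $\mu$-density of $Q$. The minimizers of the two objectives therefore coincide.
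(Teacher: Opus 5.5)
Your proposal is correct and follows the paper's proof: unique scalar minimization with the interior and boundary first-order conditions, a scalar multiplier for the mass constraint, integration of the pointwise comparison, and uniqueness from the uniqueness of the scalar minimizer. The only difference is presentational: the paper writes Step~4 as Fenchel--Young weak duality with $y=-(\ell-\alpha\psi'(p)+\lambda)/\alpha$, which also gives the dual identity of Corollary~\ref{cor:breg-dual-main}, whereas your direct comparison is the same inequality without the conjugate and is more careful about integrability.
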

\begin{proof}
See Supplement~\ref{supp:bregman-proofs}.
\end{proof}

\begin{corollary}[Bregman normalizer]
\label{cor:breg-normalizer-main}
Under the hypotheses of Theorem~\ref{thm:breg-main}, suppose
\[
G(\lambda)=\int_{\mathcal{Z}}q_\lambda\,\mathrm{d}\mu<\infty
\quad\text{for all }\lambda\in\mathbb{R},\qquad
\lim_{\lambda\to\infty}G(\lambda)=0,\qquad
\lim_{\lambda\to-\infty}G(\lambda)=\infty.
\]
Then there is a unique $\lambda^\star$ with $G(\lambda^\star)=1$.  If the reduced objective is finite at $q_{\lambda^\star}$, then Theorem~\ref{thm:breg-main} applies.
\end{corollary}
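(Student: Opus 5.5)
The plan is to show that $G$ is nonincreasing and continuous on $\mathbb{R}$, strictly decreasing wherever it is positive, and then to apply the intermediate value theorem. The starting point is pointwise monotonicity. For fixed $z$, write $c_\lambda(z)=\ell(z)-\alpha\psi'(p(z))+\lambda$; this is finite because $\ell$ and $\psi'(p)$ are finite $\mu$-a.e. The scalar map $t\mapsto\alpha\psi(t)+c\,t$ is strictly convex on $I$. It is coercive because $\psi'_-(\infty)=\infty$, so its minimizer exists and is unique. By the explicit forms \eqref{eq:breg-inverse-main} and \eqref{eq:breg-clip-main}, the minimizer is $t(c)=(\psi')^{-1}(-c/\alpha)$ when $-c/\alpha>\psi'_+(0)$, and $0$ otherwise. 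The barrier case $I=(0,\infty)$ has $\psi'_+(0)=-\infty$, so only the first branch occurs there. Since $\psi'$ is strictly increasing and continuous on $(0,\infty)$, being the derivative of a differentiable convex function, $(\psi')^{-1}$ is continuous and strictly increasing on its range $(\psi'_+(0),\infty)$. Hence $\lambda\mapsto q_\lambda(z)$ is continuous and nonincreasing. It is strictly decreasing wherever $q_\lambda(z)>0$, and the clipped branch glues continuously at $0$. This is the scalar monotonicity fact recorded as Lemma~\ref{lem:normalizer-main}, which I would invoke directly.

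Next comes continuity of $G$. For $\lambda\to\lambda_0$ with $\lambda\ge\lambda_1$, where $\lambda_1<\lambda_0$ is fixed, monotonicity gives the domination $0\le q_\lambda\le q_{\lambda_1}$. The hypothesis $G(\lambda_1)<\infty$ makes $q_{\lambda_1}$ $\mu$-integrable. Dominated convergence together with pointwise continuity then gives $G(\lambda)\to G(\lambda_0)$. So $G$ is a finite, continuous, nonincreasing function on $\mathbb{R}$.

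Existence and uniqueness then follow. The limits $G(\lambda)\to0$ as $\lambda\to\infty$ and $G(\lambda)\to\infty$ as $\lambda\to-\infty$, combined with continuity and the intermediate value theorem, give some $\lambda^\star$ with $G(\lambda^\star)=1$. For uniqueness, suppose $\lambda_1<\lambda_2$ and $G(\lambda_1)=G(\lambda_2)=1$. Then $q_{\lambda_1}-q_{\lambda_2}\ge0$ integrates to zero, so $q_{\lambda_1}=q_{\lambda_2}$ $\mu$-a.e. However, $G(\lambda_2)=1>0$ means $q_{\lambda_2}>0$ on a set of positive $\mu$-measure. On that set, strict pointwise decrease gives $q_{\lambda_1}>q_{\lambda_2}$, which is a contradiction. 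The last sentence of the corollary is then immediate: the hypotheses of Theorem~\ref{thm:breg-main} are exactly normalization at $\lambda^\star$ plus finiteness of the reduced objective at $q_{\lambda^\star}$.

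The main obstacle is the careful handling of the boundary branch in the pointwise step. This means verifying continuity of $q_\lambda(z)$ in $\lambda$ across the threshold $-c_\lambda/\alpha=\psi'_+(0)$, where $(\psi')^{-1}(y)\to0$ as $y\downarrow\psi'_+(0)$. It also means handling the convention $\psi'(p)=\psi'_+(0)$ on $\{p=0\}$ in the case $I=[0,\infty)$. Both reduce to the continuity and strict monotonicity of $\psi'$ on $(0,\infty)$ and to the definition of $\psi'_+(0)$ as a limit. I would check these directly rather than rely on a general lemma.
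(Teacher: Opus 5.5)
Your proposal is correct and follows essentially the paper's route. The paper applies Lemma~\ref{lem:normalizer-main} with $\Phi(c)=\mathrm{arg\,min}_{t\in I}\{\alpha\psi(t)+ct\}$ and $h=\ell-\alpha\psi'(p)$; your dominated-convergence continuity, intermediate-value existence and strict-decrease uniqueness arguments are that lemma's proof written inline, and your check that the clipped inverse map is continuous, nonincreasing and strictly decreasing where positive verifies the lemma's hypotheses, which the paper leaves implicit.
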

\begin{proof}
This is Lemma~\ref{lem:normalizer-main} applied to the map $\lambda\mapsto q_\lambda$; see Supplement~\ref{supp:bregman-proofs}.
\end{proof}

\begin{corollary}[Bregman dual identity]
\label{cor:breg-dual-main}
Under the hypotheses of Theorem~\ref{thm:breg-main}, suppose that $\lambda^\star$ satisfies $\int_{\mathcal{Z}}q_{\lambda^\star}\,\mathrm{d}\mu=1$ and that the reduced objective is finite at $q_{\lambda^\star}$.  Let $\psi^*$ denote the Fenchel conjugate of $\psi$ after extending $\psi$ by $\infty$ outside $I$.  Then
\begin{equation}
\label{eq:breg-dual-main}
\inf
\int_{\mathcal{Z}}\{\alpha\psi(q)+(\ell-\alpha\psi'(p))q\}\,\mathrm{d}\mu
=
\sup_{\lambda\in\mathbb{R}}\left\{-\lambda-\alpha\int_{\mathcal{Z}}\psi^*\!\left(\psi'(p)-\frac{\ell+\lambda}{\alpha}\right)\,\mathrm{d}\mu\right\}.
\end{equation}
The infimum in (\ref{eq:breg-dual-main}) is taken over measurable functions $q:\mathcal{Z}\to I$ satisfying $\int_{\mathcal{Z}}q\,\mathrm{d}\mu=1$, and the supremum is attained at $\lambda^\star$.
\end{corollary}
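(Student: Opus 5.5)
The plan is to prove weak duality for every $\lambda$ and then exhibit equality at $\lambda^\star$. Write $c_\lambda(z)=\psi'(p(z))-(\ell(z)+\lambda)/\alpha$, and let $\psi^*$ be the conjugate of $\psi$ extended by $\infty$ off $I$. Fix a feasible $q$ (measurable, $I$-valued, $\int q\,\mathrm{d}\mu=1$) and any $\lambda\in\mathbb{R}$. Since $\int \lambda q\,\mathrm{d}\mu=\lambda$, the objective can be rewritten as
\[
\int\{\alpha\psi(q)+(\ell-\alpha\psi'(p))q\}\,\mathrm{d}\mu=-\lambda+\alpha\int\{\psi(q)-c_\lambda q\}\,\mathrm{d}\mu .
\]
Pointwise Fenchel--Young (Lemma~\ref{lem:app-fenchel}) gives $\psi(q)-c_\lambda q\ge-\psi^*(c_\lambda)$. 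Integrating yields
\[
\text{objective}(q)\ge-\lambda-\alpha\int\psi^*(c_\lambda)\,\mathrm{d}\mu ,
\]
so the infimum dominates the supremum. The integrals must be handled with care. If $\int\psi^*(c_\lambda)\,\mathrm{d}\mu$ is $+\infty$ or undefined, I will interpret the dual value as $-\infty$, and the inequality is then trivial. Otherwise the rewriting of the objective needs the splitting to be legitimate. I will check that the negative part of $\psi(q)-c_\lambda q$ is controlled by $\psi^*(c_\lambda)^+$, so that the extended integrals add correctly.

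For attainment, take $\lambda=\lambda^\star$ and $q=q_{\lambda^\star}$. By \eqref{eq:q-lambda-main}, $q_{\lambda^\star}(z)$ minimizes $t\mapsto\alpha\psi(t)-\alpha c_{\lambda^\star}(z)t$ over $I$. Hence $\psi(q_{\lambda^\star})-c_{\lambda^\star}q_{\lambda^\star}=-\psi^*(c_{\lambda^\star})$ pointwise, which is exactly Fenchel--Young equality, i.e.\ $c_{\lambda^\star}\in\partial\psi(q_{\lambda^\star})$. On the interior branch this follows from \eqref{eq:breg-inverse-main}. On the clipped branch $q=0$ we have $c_{\lambda^\star}\le\psi'_+(0)$, so $c_{\lambda^\star}\in\partial\psi(0)$ for the extended $\psi$.

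The primal objective at $q_{\lambda^\star}$ is finite by hypothesis, and $\int q_{\lambda^\star}\,\mathrm{d}\mu=1$. The decomposition above then shows that $\int\psi^*(c_{\lambda^\star})\,\mathrm{d}\mu$ is finite and that
\[
\text{objective}(q_{\lambda^\star})=-\lambda^\star-\alpha\int\psi^*(c_{\lambda^\star})\,\mathrm{d}\mu .
\]
Combining this with weak duality gives
\[
\inf\le\text{objective}(q_{\lambda^\star})=\text{dual}(\lambda^\star)\le\sup\le\inf ,
\]
so all these quantities are equal and the supremum is attained at $\lambda^\star$. This is consistent with Theorem~\ref{thm:breg-main}, which identifies $q_{\lambda^\star}$ as the minimizer.

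The main obstacle is integrability bookkeeping. The objective involves extended integrals over a $\sigma$-finite $\mu$, possibly infinite, and neither $\psi(q)$ nor $c_\lambda q$ need be separately integrable. To handle this, I will always integrate the combined pointwise quantity $\psi(q)-c_\lambda q+\psi^*(c_\lambda)\ge0$. This is a nonnegative function, so its integral is well defined. The linear term $\lambda q$ splits off exactly because $\int q\,\mathrm{d}\mu=1$ is finite. Measurability of $\psi^*(c_\lambda)$ follows because $\psi^*$ is convex and LSC and $c_\lambda$ is measurable.
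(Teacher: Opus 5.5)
Your proposal is correct and follows the paper's own route. Pointwise Fenchel--Young at $y=c_\lambda=\psi'(p)-(\ell+\lambda)/\alpha$, combined with $\int q\,\mathrm{d}\mu=1$, gives weak duality, and the scalar optimality of $q_{\lambda^\star}$ gives Fenchel--Young equality and hence attainment at $\lambda^\star$. Your extra integrability bookkeeping is a welcome tightening of details the paper leaves implicit: you integrate the nonnegative gap $\psi(q)-c_\lambda q+\psi^*(c_\lambda)$, and you use finiteness of the objective at $q_{\lambda^\star}$ to get $\psi^*(c_{\lambda^\star})\in\mathcal{L}^1(\mu)$.
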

\begin{proof}
See Supplement~\ref{supp:bregman-proofs}.
\end{proof}

\begin{remark}[Mirror-descent interpretation]
\label{rem:breg-mirror}
Equation \eqref{eq:breg-inverse-main} is a density-space mirror step in the sense of \cite{beck2003mirror}:
\[
\psi'(q^\star)=\psi'(p)-\frac{\ell+\lambda^\star}{\alpha}.
\]
The term $\lambda^\star$ is the scalar Lagrange multiplier that enforces $\int_{\mathcal{Z}}q^\star\,\mathrm{d}\mu=1$.  In a finite simplex problem, the analogous Bregman proximal map is
\[
\underset{q\in\Delta_n}{\mathrm{arg\,min}}
\left\{\langle \ell,q\rangle+\alpha\sum_{i=1}^n \left[\psi(q_i)-\psi(p_i)-\psi'(p_i)(q_i-p_i)\right]\right\},
\]
where $\Delta_n=\{q\in[0,\infty)^n:\sum_{i=1}^n q_i=1\}$.  The scalar multiplier is the Lagrange multiplier for the affine constraint $\sum_{i=1}^n q_i=1$.  Thus, on entries with $q_i>0$, the stationarity condition is $\ell_i+\alpha[\psi'(q_i)-\psi'(p_i)]+\lambda=0$, which is the finite-dimensional counterpart of \eqref{eq:breg-inverse-main}; entries with $q_i=0$ satisfy the corresponding one-sided inequality.
\end{remark}

\subsection{Examples}
\label{subsec:breg-examples}

\paragraph{Density power and least squares.}
For $\beta>0$ and $\psi_\beta(t)=t^{\beta+1}/[\beta(\beta+1)]$, Theorem~\ref{thm:breg-main} gives
\begin{equation}
\label{eq:dp-main}
  q_\lambda(z)=\left(p(z)^\beta-\frac{\beta}{\alpha}(\ell(z)+\lambda)\right)_+^{1/\beta}.
\end{equation}
Density-power divergences are standard in minimum-distance inference and robustness; see \citet[Ch.~9]{basu2011minimum}.  The corresponding Bregman representation is also discussed in that literature.  The least-squares Bregman divergence is the special case $\beta=1$, for which $\psi(t)=t^2/2$, $\mathsf{D}_\psi(Q\Vert P)=\frac12\int_{\mathcal{Z}}(q-p)^2\,\mathrm{d}\mu$, and
\begin{equation}
\label{eq:l2-main}
  q_\lambda(z)=\left(p(z)-\frac{\ell(z)+\lambda}{\alpha}\right)_+,
  \qquad \int_{\mathcal{Z}} q_{\lambda^\star}\,\mathrm{d}\mu=1.
\end{equation}
For finite $\mathcal{Z}$, \eqref{eq:l2-main} is Euclidean projection onto the probability simplex after a loss-dependent shift.  This is the same convex-duality-over-the-simplex mechanism used in finite-alphabet GVI derivations \citep[App.~11.B]{simeone2023machine}, and the same thresholded simplex projection underlying sparsemax \citep{martins2016sparsemax}.  More generally, \eqref{eq:dp-main} is the density-form analogue of Tsallis and entmax maps, which produce sparse power-law probabilities rather than exponential softmax probabilities \citep{martins2022sparse}.

\paragraph{Burg/Itakura--Saito.}
The logarithmic barrier generator is the Bregman generator behind Burg entropy and the Itakura--Saito divergence, both originating in spectral estimation and speech-processing applications \citep{burg1967maximum,itakura1968analysis}.  This generator does not satisfy the endpoint hypothesis $\psi'_-(\infty)=\infty$ in Theorem~\ref{thm:breg-main}; instead, the same scalar minimization argument applies whenever the linear coefficient makes the scalar problem coercive.  For $\psi(t)=-\log t$ on $(0,\infty)$ with $p>0$ $\mu$-a.e., suppose $\alpha+p(\ell+\lambda^\star)>0$ $\mu$-a.e. and
\[
\int_{\mathcal{Z}}\frac{\alpha p}{\alpha+p(\ell+\lambda^\star)}\,\mathrm{d}\mu=1.
\]
Then
\begin{equation}
\label{eq:burg-main}
  q^\star(z)=\frac{\alpha p(z)}{\alpha+p(z)(\ell(z)+\lambda^\star)}.
\end{equation}
Here barrier means that $\psi(t)\to\infty$ as $t\downarrow0$, so the density is kept in the open positive domain.  Since $\psi'(t)=-1/t$, the transformed density is reciprocal, and the inverse mirror step produces a reciprocal reweighting rather than the affine thresholding in \eqref{eq:l2-main}.

\section{R\'enyi divergence regularization}
\label{sec:renyi}

\begin{condition}[R\'enyi setting]
\label{cond:renyi}
Assume Condition~\ref{cond:basic} and fix $r>1$.  For $Q\ll P$ with $u=\mathrm{d}Q/\mathrm{d}P$, define
\begin{equation}
\label{eq:renyi-def-main}
  \mathsf{D}_{\mathrm{R}}^{(r)}(Q\Vert P)=\frac{1}{r-1}\log\int_{\mathcal{Z}} u^r\,\mathrm{d}P.
\end{equation}
\end{condition}

We consider
\begin{equation}
\label{eq:renyi-primal-main}
  \inf_{u\ge0,\,\int_{\mathcal{Z}} u\,\mathrm{d}P=1}\left\{\int_{\mathcal{Z}}\ell u\,\mathrm{d}P+\frac{\alpha}{r-1}\log\int_{\mathcal{Z}} u^r\,\mathrm{d}P\right\}.
\end{equation}
Let $s=1/(r-1)$ and, for $\lambda>\mathrm{ess\,inf}_{P}\ell$, put
\begin{equation}
\label{eq:A-def-main}
  A_t(\lambda)=\int_{\mathcal{Z}}(\lambda-\ell)_+^t\,\mathrm{d}P.
\end{equation}
Because $\ell$ is bounded below, $A_t(\lambda)<\infty$ for all finite $\lambda$ and $t>0$.

\begin{theorem}[R\'enyi optimizer characterization]
\label{thm:renyi-main}
Assume Condition~\ref{cond:renyi}.  Let $u^\star$ be a global minimizer of \eqref{eq:renyi-primal-main}.  If $0<\int_{\mathcal{Z}} (u^\star)^r\,\mathrm{d}P<\infty$, then there exists $\lambda^\star>\operatorname*{ess\,inf}_{P}\ell$ such that
\begin{equation}
\label{eq:renyi-density-main}
  \frac{\mathrm{d}Q^\star}{\mathrm{d}P}(z)=u^\star(z)=
  \frac{(\lambda^\star-\ell(z))_+^s}{A_s(\lambda^\star)}
\end{equation}
and $\lambda^\star$ satisfies
\begin{equation}
\label{eq:theta-main}
  \alpha=\frac{r-1}{r}\frac{A_{s+1}(\lambda^\star)}{A_s(\lambda^\star)}.
\end{equation}
\end{theorem}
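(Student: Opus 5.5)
The strategy is to linearize the logarithmic penalty at the optimizer, which turns the problem into a separable convex one. Write $M=\int_{\mathcal Z}(u^\star)^r\,\mathrm dP\in(0,\infty)$. Since $\log$ is concave, for every feasible $u$ with $\int u^r\,\mathrm dP<\infty$ we have
\[
\log\int u^r\,\mathrm dP\le \log M+\frac{1}{M}\Bigl(\int u^r\,\mathrm dP-M\Bigr).
\]
This bound points the wrong way for a direct comparison, so instead I will use a first-order (directional-derivative) argument along segments. For a feasible $v$ with $\int v^r\,\mathrm dP<\infty$ and $\int \ell v\,\mathrm dP<\infty$, set $u_\varepsilon=(1-\varepsilon)u^\star+\varepsilon v$, which is feasible. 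The objective $J(u_\varepsilon)$ is then finite. Note that $J(u^\star)$ is finite, since $J(u^\star)\le J(1)=\int\ell\,\mathrm dP<\infty$ and $\ell$ is bounded below. Differentiating at $\varepsilon=0^+$, where dominated convergence is justified by convexity of $t\mapsto t^r$ and monotone difference quotients, gives
\[
0\le \int \ell(v-u^\star)\,\mathrm dP+\frac{\alpha r}{(r-1)M}\int (u^\star)^{r-1}(v-u^\star)\,\mathrm dP .
\]
Hence $u^\star$ minimizes the linear functional $v\mapsto\int g\,v\,\mathrm dP$ with $g=\ell+c\,(u^\star)^{r-1}$, where $c=\alpha r/((r-1)M)$. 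Equivalently, $u^\star$ minimizes the convex separable functional $\int\{\ell v+\tfrac{c}{r}v^r\}\,\mathrm dP$ over $\mathcal U$, because the latter has the same directional derivative at $u^\star$ and is convex.

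The convex problem is exactly of the type in Theorem~\ref{thm:f-main}, with the generator $t^r$ in place of $f$ up to affine terms; the normalization $f(1)=0\le f$ is irrelevant for the argmin. Its stationarity, proved by the same pointwise argument with a scalar multiplier $\mu$ (equivalently Lemma~\ref{lem:normalizer-main}), yields
\[
u^\star=\Bigl(\tfrac{-\ell-\mu}{c}\Bigr)_+^{1/(r-1)}\quad P\text{-a.s.}
\]
To avoid relying on existence of the multiplier, I can argue directly. Let $\mu=-\operatorname{ess\,inf}_{\{u^\star>0\}} g$. Mass-swapping perturbations, which move mass from a set where $g$ is large to a set where $g$ is small, show that $g=-\mu$ a.s. on $\{u^\star>0\}$ and $g\ge-\mu$ on $\{u^\star=0\}$. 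Setting $\lambda^\star=-\mu$ gives $u^\star=c^{-s}(\lambda^\star-\ell)_+^s$ with $s=1/(r-1)$. Because $\int u^\star\,\mathrm dP=1$, the set $\{\ell<\lambda^\star\}$ must have positive mass, so $\lambda^\star>\operatorname{ess\,inf}\ell$. Normalization gives $c^{-s}=1/A_s(\lambda^\star)$, which is \eqref{eq:renyi-density-main}.

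For the threshold equation \eqref{eq:theta-main}, I substitute the density into the definition of $c$. On one hand $c=A_s^{1/s}=A_s^{r-1}$. On the other hand $M=\int (\lambda^\star-\ell)_+^{sr}\,\mathrm dP/A_s^r=A_{s+1}/A_s^r$, using $sr=s+1$. Then $c=\alpha r A_s^r/((r-1)A_{s+1})$, and equating the two expressions gives $\alpha=\tfrac{r-1}{r}A_{s+1}/A_s$.

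I expect the main obstacle to be the rigorous justification of the variational inequality. This requires admissible perturbations $v$ such as $u^\star$ plus or minus small multiples of indicator-normalized bounded functions supported where $u^\star$ is bounded away from zero (for removing mass) or on arbitrary sets (for adding mass). It also requires care with the possibly infinite $\ell$, which forces $u^\star=0$ where $\ell=\infty$, and with integrability of $(u^\star)^{r-1}$ times those perturbations. I would restrict the swaps to sets where $\ell$ and $u^\star$ are bounded, and then pass to essential infima by exhausting the space.
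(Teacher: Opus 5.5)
Your proof is correct, but it is organized differently from the paper's. The paper's argument is local: two-sided bounded, mean-zero perturbations supported on $B_n=\{u^\star\ge1/n\}$ show that $\ell+\frac{\alpha r}{r-1}(u^\star)^{r-1}/S(u^\star)$ is constant on each $B_n$, where $S(u)=\int_{\mathcal{Z}}u^r\,\mathrm{d}P$. An exhaustion extends this to $B=\{u^\star>0\}$, and a separate one-sided swap from $E\subseteq B$ to $C\subseteq B^c$ gives $\ell\ge\lambda^\star$ off the support.

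Your convex combinations $(1-\varepsilon)u^\star+\varepsilon v$ are automatically feasible and need no lower bound on $u^\star$. The difference quotients of $t\mapsto t^r$ decrease monotonically and are bounded above by $v^r-(u^\star)^r$, and H\"older gives $\int(u^\star)^{r-1}v\,\mathrm{d}P<\infty$. The result is one global inequality $\int g(v-u^\star)\,\mathrm{d}P\ge0$ that covers the support and the zero set together. The test functions $\mathbf{1}_C/P(C)$ are admissible because $\ell\in\mathcal{L}^1(P)$, which also makes $\ell<\infty$ $P$-a.s. So the obstacles you list at the end are already resolved.

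The genuinely new ingredient is your reduction to the separable surrogate $\int\{\ell v+\frac{c}{r}v^r\}\,\mathrm{d}P$. Take the normalized generator $f(t)=(t^r-1)/r-(t-1)$ with temperature $c$. Theorem~\ref{thm:f-main} then already supplies the normalizing multiplier and a unique minimizer, so $u^\star$ must coincide with it, and the mass-swapping fallback is unnecessary.

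You also dismissed your first display too quickly; it points the right way. Write $J$ for the objective in \eqref{eq:renyi-primal-main} and $\tilde J(u)=\int\ell u\,\mathrm{d}P+\frac{\alpha}{r-1}\{\log M-1+S(u)/M\}$. The tangent-line bound gives $\tilde J\ge J$ with equality at $u^\star$. Hence $\tilde J(u)\ge J(u)\ge J(u^\star)=\tilde J(u^\star)$, so $u^\star$ minimizes the surrogate with no differentiation at all.

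What your route buys is structure. A R\'enyi optimizer is the Cressie--Read ($\gamma=r$) solution at the effective temperature $\alpha r/M$, and \eqref{eq:theta-main} is exactly the self-consistency condition for that temperature. The paper's perturbation argument, by contrast, is self-contained and does not depend on Theorem~\ref{thm:f-main}.
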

\begin{proof}
See Supplement~\ref{supp:renyi-proofs}.
\end{proof}

\begin{remark}
Theorem~\ref{thm:renyi-main} is a characterization of exact optimizers, not an existence theorem.  The logarithmic $\mathcal{L}^r$ term is not used here through the same convex integral structure as the $f$-divergence and Bregman sections, so the theorem does not assert uniqueness.  When several thresholds satisfy \eqref{eq:theta-main}, the objective in \eqref{eq:renyi-primal-main} must be evaluated to identify the global solution.  This situation can already occur on two points.  Let $r=2$, let $\ell(1)=0$, $\ell(2)=10$, let $P(\{1\})=0.1$ and $P(\{2\})=0.9$.  For $\alpha=4$, the threshold equation has the three solutions $\lambda=8$, $\lambda=13-\sqrt{7}$, and $\lambda=13+\sqrt{7}$.  They give different candidate densities.  Each candidate must be substituted into the original objective in \eqref{eq:renyi-primal-main}; a candidate is globally optimal exactly when it attains the smallest objective value among the feasible candidates.
\end{remark}

\paragraph{Order-two R\'enyi divergence on a finite support.}
\label{ex:renyi-r2-finite}
Let $r=2$, let $\mathcal{Z}=\{1,\ldots,K\}$, let $\mathfrak{Z}=2^{\mathcal{Z}}$, and let $P(\{i\})=1/K$.  Let $\ell_i=\ell(i)$ for $i\in\mathcal{Z}$.  Then $s=1$ and
\begin{equation}
\label{eq:renyi-r2-finite}
  q_i^\star=Q^\star(\{i\})=\frac{(\lambda^\star-\ell_i)_+}{\sum_{j=1}^K(\lambda^\star-\ell_j)_+},
  \qquad
  \alpha=\frac12\frac{\sum_{j=1}^K(\lambda^\star-\ell_j)_+^2}{\sum_{j=1}^K(\lambda^\star-\ell_j)_+}.
\end{equation}
For a fixed active set $A=\{i:\ell_i<\lambda\}$, the threshold solves $2\alpha\sum_{i\in A}(\lambda-\ell_i)=\sum_{i\in A}(\lambda-\ell_i)^2$, with the consistency condition $\ell_i<\lambda$ for $i\in A$ and $\ell_i\ge\lambda$ for $i\notin A$.  If the losses are arranged from smallest to largest, any active set must consist of the first few entries in that ordered list.  Hence there are only finitely many candidate active sets to check.  Since the simplex is compact, a global minimizer exists; every global minimizer must appear in this candidate list, and the exact solution is obtained by evaluating \eqref{eq:renyi-primal-main} on the candidates and selecting a minimizer.  Details are given in Supplement~\ref{supp:renyi-finite}.

\section{Posterior model weights on a finite model list}
\label{sec:model-weights}

A concrete finite-state application is obtained by taking $\mathcal{Z}=\{1,\ldots,K\}$, where $i$ indexes a candidate model $M_i$.  Let $p_i=P(\{i\})$ be prior model weights, and let $x$ denote the observed data.  In ordinary Bayesian model comparison, the data enter through model evidences
\[
  m_i(x)=\int L_i(x\mid\vartheta_i)\,\pi_i(\mathrm{d}\vartheta_i),
\]
where $L_i(x\mid\vartheta_i)$ is the likelihood under model $M_i$ and $\pi_i$ is the prior for its model-specific parameter $\vartheta_i$.  Posterior model probabilities are proportional to $p_i m_i(x)$ \citep{kass1995bayes,hoeting1999bayesian}.  This is the basic weighting used in Bayesian model averaging.  Modern predictive model-combination methods also operate on finite model-weight vectors, although they may learn the weights from predictive scoring rules rather than from marginal likelihoods; see, for example, the stacking and pseudo-BMA constructions of \citet{yao2018stacking}.

Set $\ell_i(x)=-\log m_i(x)$, up to an additive constant common to all models.  The finite-state version of \eqref{eq:master-intro} is
\[
  q^\star\in\underset{q\in\Delta_K}{\mathrm{arg\,min}}
  \left\{\sum_{i=1}^K q_i\ell_i(x)+\alpha\mathsf{D}(q\Vert p)\right\},
  \qquad
  \Delta_K=\left\{q\in[0,\infty)^K:\sum_{i=1}^K q_i=1\right\}.
\]
For $\mathsf{D}=\mathsf{KL}$, this gives
\begin{equation}
\label{eq:model-kl-weights}
  q_i^\star=\frac{p_i\exp(-\ell_i/\alpha)}{\sum_{j=1}^K p_j\exp(-\ell_j/\alpha)}
  =\frac{p_i m_i(x)^{1/\alpha}}{\sum_{j=1}^K p_jm_j(x)^{1/\alpha}}.
\end{equation}
Thus $\alpha=1$ recovers ordinary posterior model probabilities.  Other divergences give different closed-form generalized posterior model weights from the same evidential input; for instance,
\begin{equation}
\label{eq:model-nonkl-weights}
\begin{aligned}
  q_i^{\chi^2}&=p_i\left(1-\frac{\ell_i+\lambda_{\chi^2}^\star}{2\alpha}\right)_+,
  &q_i^{\mathrm{LS}}&=\left(p_i-\frac{\ell_i+\lambda_{\mathrm{LS}}^\star}{\alpha}\right)_+,\\
  q_i^{\mathrm{J}}&=\frac{p_i}{\mathrm{W}\!\left(\exp\left[1+\frac{\ell_i+\lambda_{\mathrm{J}}^\star}{\alpha}\right]\right)},
  &q_i^{\mathrm{Burg}}&=\frac{\alpha p_i}{\alpha+p_i(\ell_i+\lambda_{\mathrm{Burg}}^\star)},\\
  q_i^{\mathrm{R}(r)}&=\frac{p_i(\lambda_{\mathrm{R}}^\star-\ell_i)_+^{1/(r-1)}}{\sum_{j=1}^Kp_j(\lambda_{\mathrm{R}}^\star-\ell_j)_+^{1/(r-1)}}.
\end{aligned}
\end{equation}
Here $q_i^{\chi^2}$, $q_i^{\mathrm{LS}}$, $q_i^{\mathrm{J}}$, $q_i^{\mathrm{Burg}}$, and $q_i^{\mathrm{R}(r)}$ denote the generalized posterior model weights obtained from the Pearson $\chi^2$ $f$-divergence, least-squares Bregman, Jeffreys, Burg/Itakura--Saito, and R\'enyi penalties, respectively.  Each multiplier is chosen so that the corresponding displayed vector has total mass one.
As a numerical illustration, use the Gaussian model list from \citet[Sec.~4.1]{yao2018stacking}: $M_i=\mathrm{N}(i,1)$ for $i=1,\ldots,8$, with uniform prior weights.  For a single observation $x=3.4$, taking $\ell_i=(x-i)^2/2$ gives
\begin{center}
\begin{tabular}{c|cccccccc}
$i$&1&2&3&4&5&6&7&8\\
\midrule
$q_i^{\mathsf{KL}}$&0.022&0.150&0.369&0.334&0.111&0.014&0.001&0.000\\
$q_i^{\chi^2}$&0.078&0.197&0.253&0.247&0.178&0.047&0&0\\
$q_i^{\mathrm{LS}}$&0&0&0.550&0.450&0&0&0&0\\
$q_i^{\mathrm{J}}$&0.068&0.159&0.278&0.260&0.135&0.058&0.028&0.016\\
$q_i^{\mathrm{Burg}}$&0.116&0.149&0.172&0.169&0.142&0.110&0.082&0.061\\
$q_i^{\mathrm{R}(2)}$&0&0.203&0.324&0.311&0.163&0&0&0
\end{tabular}
\end{center}
for $\alpha=1$.  The normalizing constants are one-dimensional quantities: in this example $\lambda_{\mathsf{KL}}\approx-1.162$, $\lambda_{\chi^2}\approx-2.130$, $\lambda_{\mathrm{LS}}\approx-0.505$, $\lambda_{\mathrm{J}}\approx-1.430$, $\lambda_{\mathrm{Burg}}\approx-2.254$, and $\lambda_{\mathrm{R}}^{(2)}\approx2.489$.  They were obtained by solving the scalar mass equations in \eqref{eq:model-kl-weights} and~\eqref{eq:model-nonkl-weights}; that is, each displayed vector is normalized to have total mass one.  The example shows how the same model-evidence scores recover the usual Bayesian model posterior in the $\mathsf{KL}$ case, while non-$\mathsf{KL}$ penalties can produce flatter or sparse generalized posterior model weights.

\section{Illustrations}
\label{sec:illustrations}

This section visualizes the density-form solutions from the preceding sections in two elementary models.  The $f$-divergence panels use \eqref{eq:u-inverse-main}, \eqref{eq:chi-main}, and \eqref{eq:mixed-kl-main}; the Bregman panels use \eqref{eq:breg-inverse-main}, \eqref{eq:l2-main}, and \eqref{eq:burg-main}; and the R\'enyi panel uses \eqref{eq:renyi-density-main} and \eqref{eq:theta-main} with $r=2$.  In the $\mathsf{KL}$ panel, the normalizing constant is the closed-form $Z_\alpha$ in \eqref{eq:kl-main}.  In the other $f$-divergence and Bregman panels, the scalar mass multiplier is obtained by solving the corresponding normalization equation with a bracketed one-dimensional root-finding procedure; in the R\'enyi case, admissible thresholds are further evaluated via the objective value when more than one candidate is found.  In both examples, the loss is the negative log likelihood up to an additive constant.  Hence, because there is one observation, the $\mathsf{KL}$ solution at $\alpha=1$ is the classical Bayesian posterior.

For the normal--normal illustration, let
\[
  \theta\sim \mathrm{N}(0,1),\qquad Y\mid\theta\sim \mathrm{N}(\theta,1),
\]
and take the observed value $y=1.5$.  Up to an additive constant independent of $\theta$, $\ell(\theta)=\tfrac12(y-\theta)^2$, and the classical posterior is $\theta\mid y\sim \mathrm{N}(0.75,0.5)$.

For the Poisson--gamma illustration, use the shape--rate parameterization and let
\[
  \theta\sim \mathrm{Gamma}(2,1),\qquad Y\mid\theta\sim \mathrm{Poisson}(\theta),
\]
with observed value $y=6$.  Up to an additive constant independent of $\theta$, $\ell(\theta)=\theta-y\log\theta$,
and the classical posterior is $\theta\mid y\sim \mathrm{Gamma}(8,2)$.

In both Figures~\ref{fig:normal-normal-illustration} and~\ref{fig:poisson-gamma-illustration}, the grey dashed curve is the prior, the black dotted curve is the classical posterior, and the vertical dotted line marks the generative parameter value.  The generative values used in the numerical illustrations are $\theta=1$ in the normal--normal example and $\theta=4$ in the Poisson--gamma example.  The coloured curves display the unrestricted GVI solutions for several values of $\alpha$.

\begin{figure}[!htbp]
  \centering
  \includegraphics[width=\textwidth]{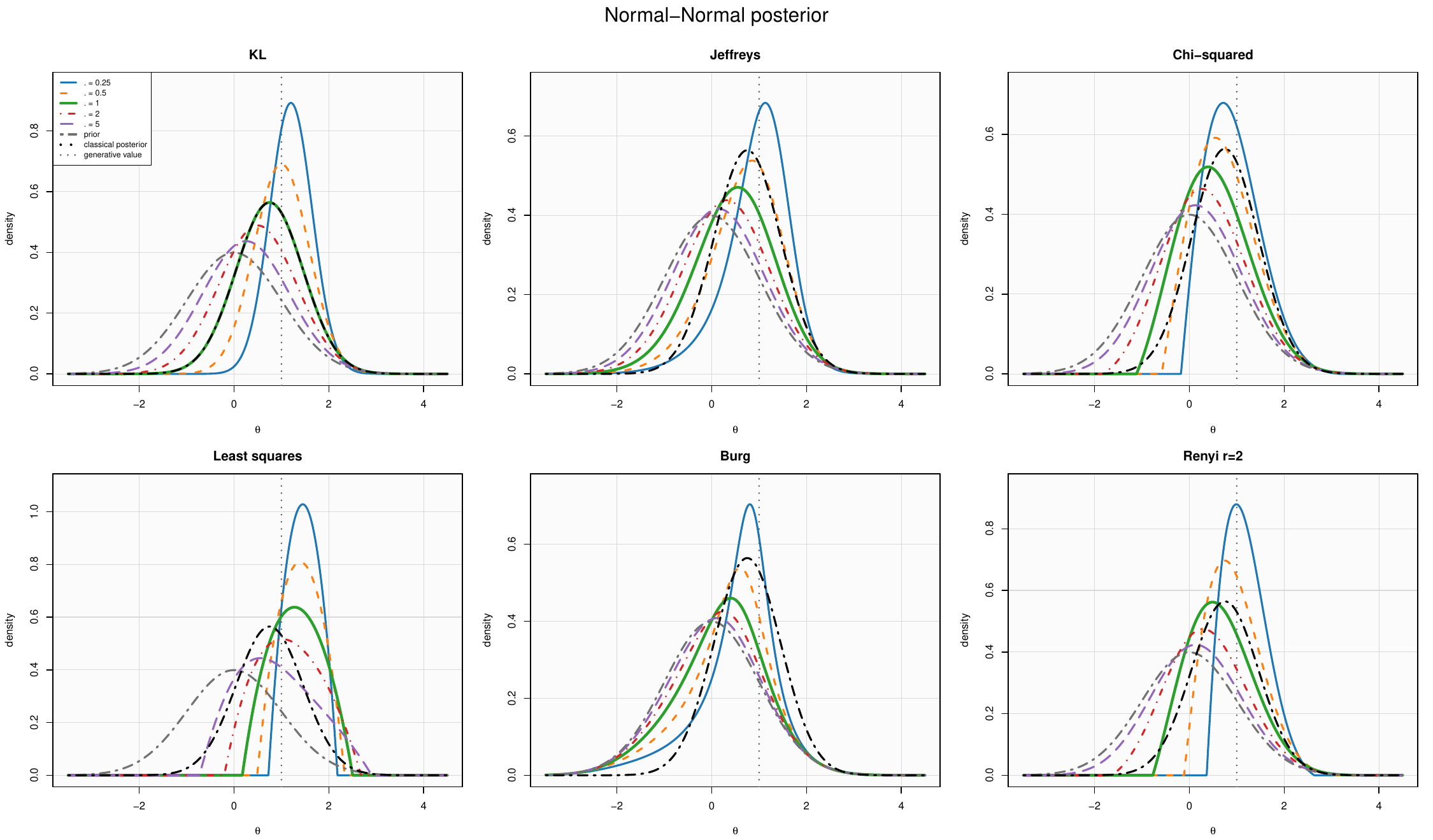}
  \caption{Normal--normal one-observation illustration.}
  \label{fig:normal-normal-illustration}
\end{figure}

\begin{figure}[!htbp]
  \centering
  \includegraphics[width=\textwidth]{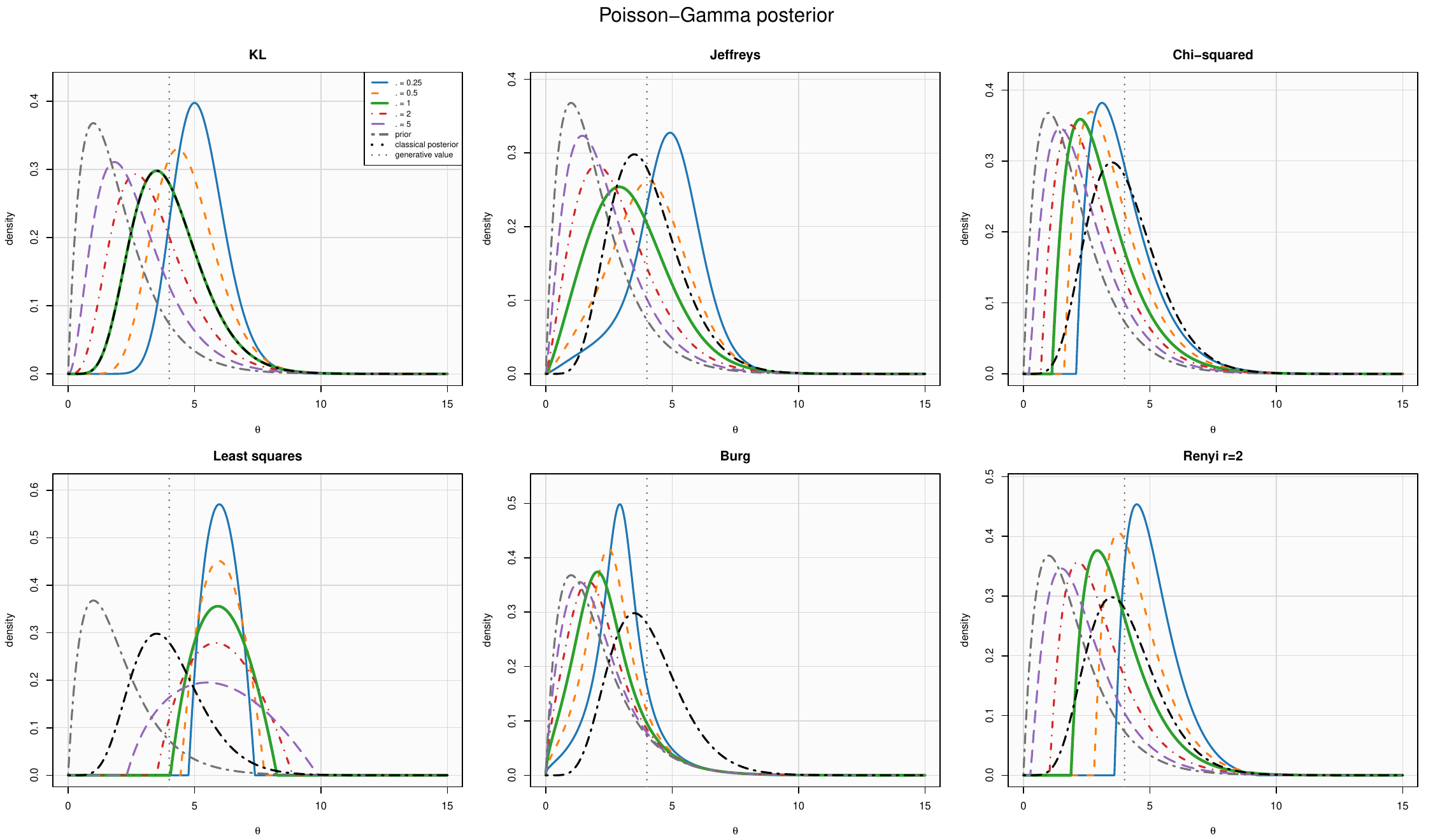}
  \caption{Poisson--gamma one-observation illustration.}
  \label{fig:poisson-gamma-illustration}
\end{figure}

Figures~\ref{fig:normal-normal-illustration} and~\ref{fig:poisson-gamma-illustration} show that changing the divergence affects more than the effective temperature of the likelihood.  In the $\mathsf{KL}$ panel, changing $\alpha$ produces the familiar exponential tilting of the prior by the likelihood loss, and the $\alpha=1$ curve coincides with the classical posterior.  The Pearson $\chi^2$, least-squares, and order-two R\'enyi panels display the positive-part structure of the formulas in Sections~\ref{sec:fdiv}, \ref{sec:bregman}, and~\ref{sec:renyi}.  By contrast, Jeffreys and Burg/Itakura--Saito regularization retain positive densities in these examples but reweight the prior through nonlinear ratio or density coordinates. 

\section{Conclusion}
\label{sec:conclusion}

Exact GVI solutions exist beyond the $\mathsf{KL}$/Gibbs case.  These formulas can make non-$\mathsf{KL}$ GVI analytically tractable and can also serve as inputs to downstream asymptotic questions, such as posterior consistency, Bernstein--von Mises limits, and large-sample limits of posterior functionals, in the spirit of existing generalized-posterior theory \citep{miller2021asymptotic,nguyen2026largesample}.  Natural extensions include projection of the closed-form solutions onto tractable variational families, development of stochastic or deterministic solvers for the scalar multiplier, and analysis of their numerical and statistical properties.  We leave these directions to future work.

\section*{Acknowledgements}
The authors thank Julyan Arbel and Thomas Guilmeau for helpful discussions.

{\setlength{\bibsep}{0pt plus 0.3ex}
\bibliographystyle{plainnat}
\bibliography{closed_form_gvi_acml2026}}

\newpage
\begin{center}
{\Large\bfseries Supplementary Materials}\\[0.5em]
{\large Closed-form solutions to some generalized variational inference problems}
\end{center}

\renewcommand{\thesection}{S\arabic{section}}
\renewcommand{\thesubsection}{S\arabic{section}.\arabic{subsection}}
\makeatletter
\renewcommand{\theHsection}{S\arabic{section}}
\renewcommand{\theHsubsection}{S\arabic{section}.\arabic{subsection}}
\makeatother
\setcounter{section}{0}
\setcounter{subsection}{0}

\section{Foundational facts}
\label{app:facts}

The following facts are used as convex-analytic and scalar-normalization background behind our technical statements.

\begin{lemma}[Fenchel--Young equality]
\label{lem:app-fenchel}
Let $g:\mathbb{R}\to(-\infty,\infty]$ be proper, LSC, and convex.  Then $g(t)+g^*(y)\ge ty$ for all $t,y\in\mathbb{R}$, and equality holds if and only if $y\in\partial g(t)$.
\end{lemma}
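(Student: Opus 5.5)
The inequality is immediate from the definition of the conjugate. The equality characterization is a one-line manipulation of the subgradient definition. No result earlier in the paper is needed beyond the definitions of $g^*$ and $\partial g$ given in Section~\ref{subsec:notation}.

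\emph{Inequality.} Fix $t,y\in\mathbb{R}$. If $g(t)=\infty$, the left side is $+\infty$; here we note that $g^*(y)>-\infty$, since properness gives some $s$ with $g(s)<\infty$, so $g^*(y)\ge sy-g(s)>-\infty$. The sum is then well defined and the inequality holds. If $g(t)<\infty$, then by definition $g^*(y)=\sup_s\{sy-g(s)\}\ge ty-g(t)$, which rearranges to $g(t)+g^*(y)\ge ty$.

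\emph{Equality.} Suppose $g(t)+g^*(y)=ty$. Then $g(t)$ is finite, since otherwise the left side is $+\infty$, and $g^*(y)=ty-g(t)$. The definition of $g^*$ then gives, for every $s$, $sy-g(s)\le ty-g(t)$, that is, $g(s)\ge g(t)+y(s-t)$. This says exactly that $y\in\partial g(t)$.

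Conversely, let $y\in\partial g(t)$. Here the main care point is the convention that $\partial g(t)$ is nonempty only where $g(t)$ is finite. I will read the subgradient definition this way, noting that if $g(t)=\infty$ the subgradient inequality cannot hold at any $s$ with $g(s)<\infty$, which exists by properness. With $g(t)$ finite, the subgradient inequality rearranges to $sy-g(s)\le ty-g(t)$ for all $s$. Taking the supremum over $s$ gives $g^*(y)\le ty-g(t)$, and combining this with the inequality already proved yields equality.

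Convexity and lower semicontinuity are not actually needed for this scalar statement; they only matter for the biconjugate facts used elsewhere. The main obstacle is therefore only the bookkeeping of infinite values described above, so the proof is short.
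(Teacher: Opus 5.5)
Your proof is correct and is essentially the paper's: the paper just cites Borwein--Lewis, Prop.~3.3.4, and your definitional argument is the standard proof of that result. Your handling of the case $g(t)=\infty$ (using properness to get $g^*(y)>-\infty$ and $\partial g(t)=\emptyset$) and your remark that convexity and lower semicontinuity are not needed are both accurate.
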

\begin{proof}
This is the Fenchel--Young inequality; see \citet[Prop.~3.3.4]{borwein2006convex}. 
\end{proof}

\begin{lemma}[Separable convex integral optimality]
\label{lem:app-integral}
Let $(\mathcal{Z},\mathfrak{Z},\nu)$ be a measure space, let $I\subseteq\mathbb{R}$ be an interval, and let $H:\mathcal{Z}\times I\to(-\infty,\infty]$ be measurable in $z$ and proper, LSC, strictly convex, and coercive in $t\in I$.  If, for some $\lambda^\star\in\mathbb{R}$, a measurable function $t_{\lambda^\star}$ minimizes $H(z,t)+\lambda^\star t$ pointwise over $I$ and satisfies $\int_{\mathcal{Z}}t_{\lambda^\star}\,\mathrm{d}\nu=1$, then $t_{\lambda^\star}$ minimizes $\int_{\mathcal{Z}}H(z,t(z))\,\mathrm{d}\nu$ over measurable $t$ satisfying $\int_{\mathcal{Z}}t\,\mathrm{d}\nu=1$.  The minimizer is unique up to $\nu$-a.e. equality whenever equality in the objective is attained by another feasible function.
\end{lemma}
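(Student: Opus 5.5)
The plan is a Lagrangian argument in which the affine mass constraint is absorbed by the multiplier $\lambda^\star$. For any measurable $t$ with $\int_{\mathcal{Z}}t\,\mathrm{d}\nu=1$, and for $\nu$-a.e.\ $z$, pointwise optimality of $t_{\lambda^\star}(z)$ gives
\[
H(z,t(z))+\lambda^\star t(z)\ \ge\ H(z,t_{\lambda^\star}(z))+\lambda^\star t_{\lambda^\star}(z).
\]
Integrating over $\mathcal{Z}$ and using $\lambda^\star\int_{\mathcal{Z}} t\,\mathrm{d}\nu=\lambda^\star=\lambda^\star\int_{\mathcal{Z}}t_{\lambda^\star}\,\mathrm{d}\nu$, the multiplier terms cancel. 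This leaves $\int_{\mathcal{Z}} H(z,t)\,\mathrm{d}\nu\ge\int_{\mathcal{Z}} H(z,t_{\lambda^\star})\,\mathrm{d}\nu$, which is the minimizing property.

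The step needing care is the integration itself, since the integrands take values in $(-\infty,\infty]$. I would first dispose of the trivial case. If $\int_{\mathcal{Z}} H(z,t)\,\mathrm{d}\nu=+\infty$, there is nothing to prove. Otherwise I write
\[
H(z,t)=\bigl[H(z,t)+\lambda^\star t\bigr]-\lambda^\star t .
\]
Here $\lambda^\star t$ is integrable because $t$ is integrable (its integral equals $1$). The bracket is bounded below pointwise by the integrand evaluated at $t_{\lambda^\star}$.

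I would therefore add a standing requirement: $\int_{\mathcal{Z}} H(z,t_{\lambda^\star})\,\mathrm{d}\nu$ is finite, or at least has finite negative part. This is implicit in the intended applications, where it is a finiteness condition on the objective at the optimizer. With it, all the integrals are well defined in $(-\infty,\infty]$, additivity holds, and the inequality integrates legitimately. Coercivity and lower semicontinuity are not needed for this direction; they guarantee that the pointwise minimizer exists, which the lemma already assumes.

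For uniqueness, suppose another feasible $t$ attains the same (finite) objective value. Then the integral of the nonnegative difference
\[
\bigl[H(z,t)+\lambda^\star t\bigr]-\bigl[H(z,t_{\lambda^\star})+\lambda^\star t_{\lambda^\star}\bigr]
\]
is zero, so the difference vanishes $\nu$-a.e. Consequently $t(z)$ is also a minimizer of $s\mapsto H(z,s)+\lambda^\star s$ over $I$ for a.e.\ $z$. By strict convexity in $s$, this scalar minimizer is unique, so $t=t_{\lambda^\star}$ $\nu$-a.e.

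The main obstacle is purely the extended-real bookkeeping: justifying the cancellation of $\lambda^\star\int_{\mathcal{Z}} t\,\mathrm{d}\nu$ when objective values may be infinite. I would handle it by the case split above, treating infinite objective values trivially and using integrability of $t$, so that $\lambda^\star t$ can be subtracted without creating an indeterminate form $\infty-\infty$.
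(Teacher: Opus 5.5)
Your proposal is correct and follows the paper's proof. Both integrate the pointwise inequality $H(z,t)+\lambda^\star t\ge H(z,t_{\lambda^\star})+\lambda^\star t_{\lambda^\star}$, cancel the multiplier terms using the mass constraint, and obtain $\nu$-a.e.\ uniqueness from a vanishing nonnegative difference together with strict convexity of the scalar problem. Your extra bookkeeping (a finite negative part at $t_{\lambda^\star}$, and a finite common value in the uniqueness step) makes explicit what the paper leaves to its standing convention that integrals lie in $(-\infty,\infty]$ only when the negative part is finite, and to its hedge about ``the set where the integrals are finite.''
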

\begin{proof}
For every feasible measurable $t$, pointwise optimality gives
\[
H(z,t(z))+\lambda^\star t(z)\ge H(z,t_{\lambda^\star}(z))+\lambda^\star t_{\lambda^\star}(z)
\]
for $\nu$-a.e. $z$.  After integration, the two multiplier terms are both equal to $\lambda^\star$ by the mass constraint, so $t_{\lambda^\star}$ has objective value no larger than that of $t$.  If another feasible $t$ attains the same value, then the pointwise inequality must be an equality $\nu$-a.e. on the set where the integrals are finite.  Strict convexity of the scalar objective gives $t=t_{\lambda^\star}$ $\nu$-a.e.
\end{proof}

\begin{lemma}[Lagrange multiplier for the mass constraint]
\label{lem:app-normalcone}
For the affine set of densities $\{u:\int_{\mathcal{Z}}u\,\mathrm{d}\nu=1\}$, the Lagrange multiplier for the mass constraint is represented by a constant function.  Thus the first-order condition for a separable integrand contains a scalar additive term $\lambda$.
\end{lemma}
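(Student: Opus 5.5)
The content of Lemma~\ref{lem:app-normalcone} is that the normal cone of the affine set $\mathcal{A}=\{u:\int_{\mathcal{Z}}u\,\mathrm{d}\nu=1\}$ consists only of constants. I will prove it by identifying the tangent directions of $\mathcal{A}$ and then its annihilator. Fix a feasible $u$ (integrable, with unit mass). Every other feasible $v$ gives a direction $h=v-u$ with $\int_{\mathcal{Z}}h\,\mathrm{d}\nu=0$. Conversely, any bounded, integrable $h$ with zero integral is a feasible direction, since $u+\epsilon h\in\mathcal{A}$ for all real $\epsilon$.

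The tangent space is therefore $T=\{h:\int_{\mathcal{Z}} h\,\mathrm{d}\nu=0\}$, the kernel of the linear functional $h\mapsto\int_{\mathcal{Z}} h\,\mathrm{d}\nu$. The first-order condition for minimizing a separable functional $\int_{\mathcal{Z}} H(z,u(z))\,\mathrm{d}\nu$ over $\mathcal{A}$ asks that a (sub)gradient field $g$ satisfy $\int_{\mathcal{Z}} g h\,\mathrm{d}\nu\ge0$ for every admissible direction $h$. Because $T$ is a linear space, $h$ and $-h$ are both admissible, so the condition is $\int_{\mathcal{Z}} gh\,\mathrm{d}\nu=0$ for all $h\in T$.

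The key step is to show that such a $g$ is $\nu$-a.e. constant. I will argue this pointwise on sets. Take measurable sets $A,B$ with $0<\nu(A),\nu(B)<\infty$, and put
\[
h=\frac{\mathbf 1_A}{\nu(A)}-\frac{\mathbf 1_B}{\nu(B)}\in T.
\]
Orthogonality gives that the average of $g$ over $A$ equals the average of $g$ over $B$. By $\sigma$-finiteness, all sets of finite positive measure therefore share one common average $\lambda$. It follows that $g=-\lambda$ $\nu$-a.e., with the sign chosen to match the paper's convention. Consequently the stationarity condition reads $\partial_t H(z,u(z))+\lambda\ni0$, which is the scalar additive term claimed.

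I expect the main obstacle to be rigor at the boundary, where $u=0$ and only one-sided directions are available. Integrability of $g$ is a second issue. To handle both, I will state the lemma in the sufficiency form that the paper actually uses: for any constant $\lambda$, the term $\lambda\int_{\mathcal{Z}}(v-u)\,\mathrm{d}\nu$ vanishes for all feasible $u,v$. This is exactly how the constant enters in Lemma~\ref{lem:app-integral}. The necessity direction then needs only to be stated for interior points, which is all the main theorems require.
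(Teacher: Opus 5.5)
Your proof is correct, but it argues the converse of what the paper proves. The two routes buy different things.

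\textbf{The paper's route.} The paper's proof is only a sufficiency observation. On the feasible set the added term $\lambda\int_{\mathcal{Z}}u\,\mathrm{d}\nu$ is the constant $\lambda$; equivalently, $\lambda\int_{\mathcal{Z}}(v-u)\,\mathrm{d}\nu=0$. So the term can be inserted without changing the minimizers, and $\lambda$ is then fixed by normalization, not by any duality or constraint-qualification argument. This is all that Lemma~\ref{lem:app-integral} and the Fenchel--Young proofs of Theorems~\ref{thm:f-main} and~\ref{thm:breg-main} use, and your closing paragraph reproduces it.

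\textbf{Your route.} Your main argument proves the converse: any $g$ that annihilates all zero-mass directions is $\nu$-a.e.\ constant, shown with the test functions $\mathbf 1_A/\nu(A)-\mathbf 1_B/\nu(B)$. This is the real content behind necessity statements. The paper never isolates it in this lemma, but it carries out exactly this computation inline in the proof of Theorem~\ref{thm:renyi-main}, using perturbations supported in $B_n=\{u^\star\ge 1/n\}$ and then $h=\mathbf 1_C/P(C)-\mathbf 1_E/P(E)$ on the zero set.

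\textbf{What your route costs.} It needs regularity that the paper's one-liner avoids.
\begin{itemize}
\item Take $A,B$ of finite positive measure inside $\{|g|\le M\}\cap\{u\ge 1/n\}$, and exhaust over $M$ and $n$. This makes the averages finite and keeps $u+\epsilon h\ge 0$ once nonnegativity is imposed. ``Interior points'' alone is not enough, since a bounded $h$ supported on $\{u>0\}$ can still push $u$ negative if $u$ is not bounded away from zero there.
\item For a nonsmooth integrand, getting a single subgradient field $g$ with $\int_{\mathcal{Z}} gh\,\mathrm{d}\nu\ge 0$ for all admissible $h$ requires a subdifferential sum rule. Here this is moot only because the integrands are differentiable on $(0,\infty)$.
\end{itemize}

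\textbf{A misstatement.} Your remark that interior necessity is ``all the main theorems require'' is off. Theorems~\ref{thm:f-main} and~\ref{thm:breg-main} use no necessity at all. The R\'enyi characterization needs more than interior necessity: it also needs the one-sided boundary condition $\ell\ge\lambda^\star$ on $\{u^\star=0\}$, which the paper derives directly rather than through this lemma.
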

\begin{proof}
Adding a scalar multiple of the mass constraint changes the objective by $\lambda\int_{\mathcal{Z}}u\,\mathrm{d}\nu$.  On the feasible set this quantity is the constant $\lambda$, so it does not change which feasible density minimizes the objective.  The scalar $\lambda$ is chosen so that the pointwise minimizer of the modified integrand has total mass one.  Equivalently, for any two feasible densities $u$ and $v$, $\lambda\int_{\mathcal{Z}}(v-u)\,\mathrm{d}\nu=0$, which is why only a constant additive term appears in the scalar first-order condition.
\end{proof}

\begin{lemma}[Normalizing multiplier]
\label{lem:normalizer-main}
Let $(\mathcal{Z},\mathfrak{Z},\nu)$ be a measure space, let $h:\mathcal{Z}\to\mathbb{R}$ be measurable, and let $\Phi:\mathbb{R}\to[0,\infty)$ be continuous, nonincreasing, and strictly decreasing on $\{x:\Phi(x)>0\}$.  Suppose
\[
G(\lambda)=\int_{\mathcal{Z}}\Phi(h(z)+\lambda)\,\nu(\mathrm{d}z)<\infty \quad\text{for all }\lambda\in\mathbb{R}.
\]
Then $G$ is continuous and nonincreasing, and each positive level set $G^{-1}(\{c\})$, $c>0$, contains at most one point.  If, additionally,
\[
\lim_{\lambda\to\infty}G(\lambda)=a,\qquad
\lim_{\lambda\to-\infty}G(\lambda)=b,
\]
with $0\le a<b\le\infty$, then for every $c\in(a,b)$ there is a unique $\lambda$ such that $G(\lambda)=c$.
\end{lemma}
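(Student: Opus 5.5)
The argument has three parts, taken in order: continuity and monotonicity of $G$, injectivity on positive levels, and existence via the intermediate value theorem.

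\emph{Monotonicity and continuity.} Since $\Phi$ is nonincreasing, for $\lambda\le\lambda'$ we have $\Phi(h(z)+\lambda)\ge\Phi(h(z)+\lambda')$ pointwise. Integrating gives $G(\lambda)\ge G(\lambda')$. For continuity at $\lambda_0$, take any sequence $\lambda_n\to\lambda_0$ and assume without loss of generality that $\lambda_n\ge\lambda_0-1$ for all $n$. Then
\[
0\le\Phi(h(z)+\lambda_n)\le\Phi(h(z)+\lambda_0-1),
\]
and the right-hand side is $\nu$-integrable because $G(\lambda_0-1)<\infty$. Continuity of $\Phi$ gives pointwise convergence of the integrands, so dominated convergence yields $G(\lambda_n)\to G(\lambda_0)$.

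\emph{At most one point in each positive level set.} Suppose $\lambda<\lambda'$ and $G(\lambda)=G(\lambda')=c>0$. The integrand difference $\Phi(h+\lambda)-\Phi(h+\lambda')$ is nonnegative and has integral zero. Both integrals are finite, so the subtraction is legitimate, and hence $\Phi(h+\lambda)=\Phi(h+\lambda')$ $\nu$-a.e. Since $c>0$, the set $S=\{z:\Phi(h(z)+\lambda')>0\}$ has positive $\nu$-measure. On $S$, because $\Phi$ is nonincreasing and $\lambda<\lambda'$, every point of $[h(z)+\lambda,\,h(z)+\lambda']$ satisfies $\Phi>0$. Strict decrease on $\{\Phi>0\}$ then gives $\Phi(h(z)+\lambda)>\Phi(h(z)+\lambda')$ for every $z\in S$. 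This contradicts a.e. equality, so the level set contains at most one point.

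\emph{Existence.} Fix $c\in(a,b)$. Because $G$ is nonincreasing with limits $a$ at $+\infty$ and $b$ at $-\infty$, and $a<c<b$, there exist $\lambda_1<\lambda_2$ with $G(\lambda_1)>c>G(\lambda_2)$. This holds even when $b=\infty$, since $G$ is finite everywhere. By continuity of $G$ and the intermediate value theorem there is some $\lambda\in(\lambda_1,\lambda_2)$ with $G(\lambda)=c$. Since $c>a\ge0$, the injectivity step shows this $\lambda$ is unique.

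The only delicate point is the injectivity step. It relies on strict decrease applying only where $\Phi>0$, which is why the argument must work on the set $S$ determined by the larger multiplier $\lambda'$, and on $c>0$ to guarantee that $S$ is not $\nu$-null.
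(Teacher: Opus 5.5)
Your proposal is correct and follows essentially the same route as the paper: dominated convergence with a shifted dominating function $\Phi(h+\lambda_0-\delta)$ for continuity, strict decrease on the set where $\Phi(h+\lambda')>0$ (which has positive measure because $c>0$) for injectivity, and the intermediate value theorem for existence. Phrasing injectivity as a contradiction with $\nu$-a.e.\ equality, and bracketing $c$ explicitly (including the case $b=\infty$), are only cosmetic refinements of the paper's argument.
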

\begin{proof}
Monotonicity of $G$ follows from monotonicity of $\Phi$.  To prove continuity, fix $\lambda\in\mathbb{R}$ and choose $\delta>0$.  For $|\eta|\le\delta$,
\[
0\le \Phi(h+\lambda+\eta)\le \Phi(h+\lambda-\delta),
\]
because $\Phi$ is nonincreasing.  The right-hand side is integrable by the hypothesis $G(\lambda-\delta)<\infty$.  Since $\Phi$ is continuous, dominated convergence gives $G(\lambda+\eta)\to G(\lambda)$ as $\eta\to0$.

For uniqueness, suppose $\lambda_1<\lambda_2$ and $G(\lambda_1)=G(\lambda_2)=c>0$.  Let $A=\{z:\Phi(h(z)+\lambda_2)>0\}$.  Since $G(\lambda_2)=c>0$, $\nu(A)>0$.  On $A$, monotonicity gives $\Phi(h+\lambda_1)\ge\Phi(h+\lambda_2)>0$, so both arguments lie in the positivity set of $\Phi$.  Strict decrease on this set gives $\Phi(h+\lambda_1)>\Phi(h+\lambda_2)$ on $A$, while monotonicity gives the same weak inequality on $\mathcal{Z}\setminus A$.  Hence $G(\lambda_1)>G(\lambda_2)$, a contradiction.

If the endpoint limits are $a$ and $b$ with $0\le a<b\le\infty$, continuity and monotonicity imply that the range of $G$ contains every $c\in(a,b)$.  The uniqueness just proved completes the argument.
\end{proof}

\section{Proof of Theorem~\ref{thm:f-main} and Corollary~\ref{cor:f-dual-main}}
\label{supp:proof-f-main}

Fix $\lambda\in\mathbb{R}$ and $z\in\mathcal{Z}$.  The scalar map
\[
t\mapsto \alpha f(t)+(\ell(z)+\lambda)t
\]
is proper, LSC, and strictly convex on $[0,\infty)$.  The assumption $f'_-(\infty)=\infty$ implies that the derivative of this scalar objective tends to $\infty$ as $t\to\infty$, hence the objective is coercive and has a unique minimizer.  If the minimizer is positive, differentiability gives
\[
\ell(z)+\lambda+\alpha f'(u_\lambda(z))=0.
\]
If the minimizer is zero, the one-sided optimality condition is
\[
\ell(z)+\lambda+\alpha f'_+(0)\ge0,
\]
equivalently $-(\ell(z)+\lambda)/\alpha\le f'_+(0)$.  This proves \eqref{eq:u-inverse-main}.  Since the scalar solution is obtained by applying a monotone inverse, with possible clipping at zero, to the measurable function $\ell+\lambda$, it is measurable.

It remains to show that the scalar minimizer can be normalized.  Define
\[
G(\lambda)=\int_{\mathcal{Z}}u_\lambda\,\mathrm{d}P.
\]
Since $\ell$ is bounded below, say $\ell\ge m$ $P$-a.s., the formula \eqref{eq:u-inverse-main} gives
\[
0\le u_\lambda(z)\le \max\left\{0,(f')^{-1}\!\left(-\frac{m+\lambda}{\alpha}\right)\right\}
\]
with the obvious interpretation when the inverse branch is inactive.  Hence $G(\lambda)<\infty$ for every finite $\lambda$.  The map $\lambda\mapsto u_\lambda(z)$ is continuous and nonincreasing, and is strictly decreasing whenever it is positive.  Lemma~\ref{lem:normalizer-main} implies that $G$ is continuous and that each positive level set contains at most one point.  As $\lambda\to\infty$, $u_\lambda(z)\to0$ pointwise and dominated convergence gives $G(\lambda)\to0$.  As $\lambda\to-\infty$, $u_\lambda(z)\to\infty$ for $P$-a.e. $z$ with finite $\ell(z)$, and this set has $P$-mass one because $\ell\in\mathcal{L}^1(P)$.  Monotone convergence gives $G(\lambda)\to\infty$.  Lemma~\ref{lem:normalizer-main} therefore gives a unique $\lambda^\star$ such that $G(\lambda^\star)=1$.

Let $u\in\mathcal{U}$.  Applying Lemma~\ref{lem:app-fenchel} with $y=-(\ell+\lambda)/\alpha$ yields, $P$-a.s.,
\[
 f(u)+f^*\!\left(-\frac{\ell+\lambda}{\alpha}\right)
 \ge -\frac{\ell+\lambda}{\alpha}u.
\]
Multiplying by $\alpha$ and integrating over $\mathcal{Z}$ gives
\[
\int_{\mathcal{Z}}\{\ell u+\alpha f(u)\}\,\mathrm{d}P
\ge -\lambda\int_{\mathcal{Z}}u\,\mathrm{d}P
-
\alpha\int_{\mathcal{Z}}f^*\!\left(-\frac{\ell+\lambda}{\alpha}\right)\,\mathrm{d}P.
\]
Since $\int_{\mathcal{Z}}u\,\mathrm{d}P=1$, this is the weak dual bound
\[
\int_{\mathcal{Z}}\{\ell u+\alpha f(u)\}\,\mathrm{d}P
\ge
-\lambda-
\alpha\int_{\mathcal{Z}}f^*\!\left(-\frac{\ell+\lambda}{\alpha}\right)\,\mathrm{d}P.
\]
Taking the supremum over $\lambda$ gives weak duality, in the sense that the primal infimum is bounded below by the supremum of the displayed dual lower bounds.

At $\lambda=\lambda^\star$, the density $u_{\lambda^\star}$ belongs to $\mathcal{U}$.  By construction, $u_{\lambda^\star}(z)$ minimizes $t\mapsto\alpha f(t)+(\ell(z)+\lambda^\star)t$.  Equivalently, for $P$-a.e. $z$, the value $y_z=-(\ell(z)+\lambda^\star)/\alpha$ belongs to $\partial f(u_{\lambda^\star}(z))$, with the boundary case $u_{\lambda^\star}(z)=0$ interpreted through the one-sided subgradient condition.  Hence Fenchel--Young is an equality $P$-a.s.,
\[
 f(u_{\lambda^\star})+f^*\!\left(-\frac{\ell+\lambda^\star}{\alpha}\right)
 =-\frac{\ell+\lambda^\star}{\alpha}u_{\lambda^\star}.
\]
Therefore the weak dual bound is attained by $u_{\lambda^\star}$.  This proves optimality and the dual identity in Corollary~\ref{cor:f-dual-main}.  The supremum in \eqref{eq:f-dual-main} is attained at $\lambda^\star$.

If another feasible $u$ also attained the minimum, equality in the weak dual bound at $\lambda^\star$ would be necessary.  Equality in Fenchel--Young would then hold for $P$-a.e. $z$.  Since the scalar minimizer is unique, $u(z)=u_{\lambda^\star}(z)$ for $P$-a.e. $z$.  This proves uniqueness.

\section[Derivations for the f-divergence examples]{Derivations for the $f$-divergence examples}
\label{supp:f-examples}

For $\mathsf{KL}$, $f'(t)=\log t$ and $u_\lambda=e^{-(\ell+\lambda)/\alpha}$.  Normalization gives
\[
e^{-\lambda/\alpha}=\left(\int_{\mathcal{Z}}e^{-\ell/\alpha}\,\mathrm{d}P\right)^{-1},
\]
which is \eqref{eq:kl-main}.  For Pearson $\chi^2$, $f'(t)=2(t-1)$, so the unconstrained solution is $1-(\ell+\lambda)/(2\alpha)$ and the constraint $t\ge0$ gives the positive part in \eqref{eq:chi-main}.

For squared Hellinger, the Lagrangian integrand is
\[
  (\ell+\lambda)t+\alpha(\sqrt t-1)^2
  = (\ell+\lambda+\alpha)t-2\alpha\sqrt t+\alpha.
\]
Let $c=\ell+\lambda+\alpha$.  If $c>0$, then
\[
ct-2\alpha\sqrt t=\left(\sqrt c\sqrt t-\frac{\alpha}{\sqrt c}\right)^2-\frac{\alpha^2}{c},
\]
with equality at $t=\alpha^2/c^2$.  Choosing $\lambda=\lambda^\star$ satisfying the normalization condition gives \eqref{eq:hell-main}.  Strict convexity of $t\mapsto(\sqrt t-1)^2$ on $(0,\infty)$ and the equality argument above give uniqueness.  If $c<0$ on a set of positive $P$-mass, the scalar objective tends to $-\infty$ as $t\to\infty$ on that set; if $c=0$, it tends to $-\infty$ as $\sqrt t\to\infty$.  Hence the positivity condition is the condition under which the displayed scalar minimizer is meaningful.

\section[Proof of the reverse f-divergence and mixed KL formulas]{Proof of the reverse $f$-divergence and mixed $\mathsf{KL}$ formulas}
\label{supp:reverse-mixed}

If $r=\mathrm{d}Q/\mathrm{d}P>0$, then $\mathrm{d}P/\mathrm{d}Q=1/r$ and $\mathrm{d}Q=r\,\mathrm{d}P$, hence
\[
\mathsf{D}_f(P\Vert Q)=\int_{\mathcal{Z}} f(1/r)r\,\mathrm{d}P
=\int_{\mathcal{Z}} f^\diamond(r)\,\mathrm{d}P,
\qquad f^\diamond(r)=r f(1/r).
\]
Convexity of $f^\diamond$ follows because $(x,t)\mapsto t f(x/t)$ is the perspective of $f$, restricted to $x=1$.  The proof of \eqref{eq:reverse-f-main} is then the proof of Theorem~\ref{thm:f-main} on the open domain $(0,\infty)$, under the corresponding endpoint and normalizing hypotheses for $f^\diamond$.

For reverse $\mathsf{KL}$, $f^\diamond(r)=-\log r+r-1$.  The scalar first-order condition
\[
\ell+\lambda+\alpha(1-1/r)=0
\]
gives \eqref{eq:reverse-kl-main}.  For the mixed $\mathsf{KL}$ generator,
\[
f'_{a,b}(r)=a\log r+b(1-1/r).
\]
Solving $f'_{a,b}(r)=-(\ell+\lambda)/\alpha=c$ gives
\[
 a\log r+b-b/r=c.
\]
Set $v=b/(ar)$.  Then $\log r=\log(b/a)-\log v$, and the equation becomes
\[
\log v+v=\log(b/a)+\frac{b-c}{a}.
\]
Thus $v e^v=(b/a)\exp((b-c)/a)$ and
\[
 r=\frac{b/a}{\mathrm{W}\left((b/a)\exp((b-c)/a)\right)},
\]
which is \eqref{eq:mixed-kl-main} after substituting $c=-(\ell+\lambda)/\alpha$.

\section{Proof of Theorem~\ref{thm:breg-main}, Corollary~\ref{cor:breg-normalizer-main}, and Corollary~\ref{cor:breg-dual-main}}
\label{supp:bregman-proofs}

For fixed $\lambda\in\mathbb{R}$ and $z\in\mathcal{Z}$, the terms depending on $t$ in the Lagrangian integrand are
\[
 t\mapsto \alpha\psi(t)+(\ell(z)-\alpha\psi'(p(z))+\lambda)t .
\]
If $I=[0,\infty)$, the assumptions $\psi'_-(\infty)=\infty$ and LSC give existence of a minimizer, and strict convexity gives uniqueness.  If $I=(0,\infty)$, the additional assumption $\psi'_+(0)=-\infty$ implies that the derivative of the scalar objective tends to $-\infty$ as $t\downarrow0$, while $\psi'_-(\infty)=\infty$ implies that the derivative tends to $+\infty$ as $t\to\infty$; hence the minimum is attained in the interior of $(0,\infty)$ and is unique.

When the minimizer is interior, the first-order condition is
\[
\alpha\psi'(q_\lambda(z))+\ell(z)-\alpha\psi'(p(z))+\lambda=0,
\]
which is \eqref{eq:breg-inverse-main}.  If $I=[0,\infty)$ and the minimizer occurs at zero, the right-sided derivative at the endpoint is nonnegative:
\[
\alpha\psi'_+(0)+\ell(z)-\alpha\psi'(p(z))+\lambda\ge0.
\]
Equivalently,
\[
\psi'(p(z))-\frac{\ell(z)+\lambda}{\alpha}\le\psi'_+(0),
\]
which gives the zero branch in the clipped formula \eqref{eq:breg-clip-main}.  Since the scalar solution is obtained by applying a monotone inverse, with possible clipping at zero, to measurable functions of $p$ and $\ell$, the map $q_\lambda$ is measurable.

Extend $\psi$ by $\infty$ outside $I$ and let $\psi^*$ denote the Fenchel conjugate of this extended convex function.  Put
\[
 b(z)=\ell(z)-\alpha\psi'(p(z)).
\]
For any feasible density $q$ in \eqref{eq:breg-primal-main}, the Fenchel--Young inequality with $y=-(b+\lambda)/\alpha$ gives, $\mu$-a.e.,
\[
 \psi(q)+\psi^*\!\left(-\frac{b+\lambda}{\alpha}\right)
 \ge -\frac{b+\lambda}{\alpha}q.
\]
Multiplying by $\alpha$ and integrating gives
\[
\int_{\mathcal{Z}}\{\alpha\psi(q)+bq\}\,\mathrm{d}\mu
\ge
-\lambda\int_{\mathcal{Z}}q\,\mathrm{d}\mu
-
\alpha\int_{\mathcal{Z}}\psi^*\!\left(-\frac{b+\lambda}{\alpha}\right)\,\mathrm{d}\mu.
\]
Because $q$ is feasible, $\int_{\mathcal{Z}}q\,\mathrm{d}\mu=1$, so
\[
\int_{\mathcal{Z}}\{\alpha\psi(q)+bq\}\,\mathrm{d}\mu
\ge
-\lambda
-
\alpha\int_{\mathcal{Z}}\psi^*\!\left(-\frac{b+\lambda}{\alpha}\right)\,\mathrm{d}\mu.
\]
This is the weak dual lower bound for \eqref{eq:breg-primal-main}.

Now suppose that $\lambda^\star$ normalizes $q_{\lambda^\star}$.  By the scalar optimality condition defining $q_{\lambda^\star}$,
\[
 -\frac{b(z)+\lambda^\star}{\alpha}
 \in \partial\psi(q_{\lambda^\star}(z))
\]
for $\mu$-a.e. $z$.  If $q_{\lambda^\star}(z)=0$ in the case $I=[0,\infty)$, this inclusion means
\[
 -\frac{b(z)+\lambda^\star}{\alpha}\le\psi'_+(0),
\]
which is the right-sided subgradient condition for the extended convex function at the endpoint $0$.  Hence Fenchel--Young is an equality at $q_{\lambda^\star}$, and the weak dual lower bound is attained by $q_{\lambda^\star}$.  Therefore $q_{\lambda^\star}$ is a global minimizer of \eqref{eq:breg-primal-main}, and taking the supremum over $\lambda$ gives the dual identity in Corollary~\ref{cor:breg-dual-main}.

If another feasible density $q$ also attained the same minimum, equality in the weak dual bound at $\lambda^\star$ would be necessary.  Equality in Fenchel--Young would then hold for $\mu$-a.e. $z$.  Since the scalar minimizer in \eqref{eq:q-lambda-main} is unique, $q=q_{\lambda^\star}$ $\mu$-a.e.  This proves uniqueness.  Corollary~\ref{cor:breg-normalizer-main} follows by applying Lemma~\ref{lem:normalizer-main} to $G(\lambda)=\int_{\mathcal{Z}}q_\lambda\,\mathrm{d}\mu$.

The least-squares and density-power examples follow by substituting $\psi'(t)=t$ and $\psi_\beta'(t)=t^\beta/\beta$, respectively.  For Burg/Itakura--Saito, $\psi(t)=-\log t$ gives the scalar objective
\[
  -\alpha\log t+\left(\ell(z)+\lambda+\frac{\alpha}{p(z)}\right)t,
\]
whose derivative vanishes at
\[
 t=\frac{1}{p(z)^{-1}+(\ell(z)+\lambda)/\alpha}
  =\frac{\alpha p(z)}{\alpha+p(z)(\ell(z)+\lambda)}.
\]
Normalization gives \eqref{eq:burg-main}; strict convexity gives uniqueness.

\section{Proof of Theorem~\ref{thm:renyi-main}}
\label{supp:renyi-proofs}
Let $S(u)=\int_{\mathcal{Z}}u^r\,\mathrm{d}P$ and assume $u^\star$ is a global minimizer with $0<S(u^\star)<\infty$.

For $n\ge1$, set $B_n=\{u^\star\ge 1/n\}$.  Let $h$ be bounded, measurable, supported in $B_n$, and satisfy $\int_{\mathcal{Z}}h\,\mathrm{d}P=0$.  For all sufficiently small $|\varepsilon|$, the density $u^\star+\varepsilon h$ is nonnegative and integrates to one.  The map
\[
\varepsilon\mapsto
\int_{\mathcal{Z}}\ell(u^\star+\varepsilon h)\,\mathrm{d}P
+\frac{\alpha}{r-1}\log\int_{\mathcal{Z}}(u^\star+\varepsilon h)^r\,\mathrm{d}P
\]
has a minimum at $\varepsilon=0$.  Differentiation under the integral is justified by boundedness of $h$, the lower bound $u^\star\ge1/n$ on the support of $h$, and the finiteness of $S(u^\star)$.  Hence
\[
0=\int_{B_n}\left[\ell+
\frac{\alpha r}{r-1}\frac{(u^\star)^{r-1}}{S(u^\star)}\right]h\,\mathrm{d}P.
\]
The bracketed term is therefore constant on $B_n$.  Since $B_n\uparrow B=\{u^\star>0\}$ and the constants agree on overlaps, there is $\lambda^\star\in\mathbb{R}$ such that
\[
\ell(z)+\frac{\alpha r}{r-1}\frac{u^\star(z)^{r-1}}{S(u^\star)}=\lambda^\star
\quad\text{for }P\text{-a.e. }z\in B.
\]
Thus on $B$,
\[
 u^\star(z)=k(\lambda^\star-\ell(z))^{1/(r-1)},
 \qquad
 k=\left(\frac{r-1}{\alpha r}S(u^\star)\right)^{1/(r-1)}.
\]
In particular, $\ell<\lambda^\star$ on $B$.  Since $u^\star$ has total mass one, $P(B)>0$, and therefore $\lambda^\star>\operatorname*{ess\,inf}_{P}\ell$.

It remains to identify the zero set.  Let $C\subseteq B^c$ be measurable.  Choose a measurable $E\subseteq B$ with $0<P(E)<\infty$ and $u^\star$ bounded away from zero on $E$; such an $E$ exists because $\int_{\mathcal{Z}}u^\star\,\mathrm{d}P=1$.  For $P(C)>0$, use the right-sided feasible perturbation
\[
h=\frac{\mathbf{1}_C}{P(C)}-\frac{\mathbf{1}_E}{P(E)}.
\]
For sufficiently small $\varepsilon>0$, $u^\star+\varepsilon h$ is feasible.  The one-sided derivative at zero is nonnegative.  On $C$, the derivative of the $\mathcal{L}^r$ term is zero because $r>1$ and $u^\star=0$ there; on $E$, the bracketed first-order term equals $\lambda^\star$.  Hence
\[
0\le \frac{1}{P(C)}\int_C(\ell-\lambda^\star)\,\mathrm{d}P.
\]
Since this holds for every measurable $C\subseteq B^c$ with positive $P$-mass, $\ell\ge\lambda^\star$ $P$-a.e. on $B^c$.  Consequently
\[
 u^\star(z)=k(\lambda^\star-\ell(z))_+^s,
 \qquad s=\frac1{r-1}.
\]
Normalization gives $k=1/A_s(\lambda^\star)$, proving \eqref{eq:renyi-density-main}.  Moreover,
\[
S(u^\star)=\frac{A_{s+1}(\lambda^\star)}{A_s(\lambda^\star)^r}.
\]
Combining this with $k^{r-1}=(r-1)S(u^\star)/(\alpha r)$ yields
\[
\frac{1}{A_s(\lambda^\star)^{r-1}}
=
\frac{r-1}{\alpha r}\frac{A_{s+1}(\lambda^\star)}{A_s(\lambda^\star)^r},
\]
which is equivalent to \eqref{eq:theta-main}.

\section{Finite-support calculation for order-two R\'enyi regularization}
\label{supp:renyi-finite}

For the order-two R\'enyi regularization, write $q_i=Q(\{i\})$ and use the uniform prior $P(\{i\})=1/K$.  Then $u_i=\mathrm{d}Q/\mathrm{d}P(i)=Kq_i$, and
\[
  \int u^2\,\mathrm{d}P=\frac1K\sum_{i=1}^K K^2q_i^2=K\sum_{i=1}^K q_i^2.
\]
Hence the finite-dimensional objective is
\[
  F(q)=\sum_{i=1}^K q_i\ell_i+\alpha\log\left(K\sum_{i=1}^K q_i^2\right),
  \qquad q\in\Delta_K.
\]
The simplex is compact, $\sum_i q_i^2>0$ on $\Delta_K$, and $F$ is continuous on $\Delta_K$, so a global minimizer exists.

Let $A=\{i:q_i>0\}$ be the active set of a candidate minimizer.  Substituting $s=1$ into \eqref{eq:renyi-density-main} gives
\[
  q_i=\frac{(\lambda-\ell_i)_+}{\sum_{j=1}^K(\lambda-\ell_j)_+}.
\]
Thus $A=\{i:\ell_i<\lambda\}$.  On a fixed nonempty active set $A$, this becomes
\[
  q_i=\frac{\lambda-\ell_i}{\sum_{j\in A}(\lambda-\ell_j)},\quad i\in A,
  \qquad q_i=0,\quad i\notin A,
\]
and the threshold equation \eqref{eq:theta-main} becomes
\[
  2\alpha\sum_{i\in A}(\lambda-\ell_i)=\sum_{i\in A}(\lambda-\ell_i)^2.
\]
Equivalently, if $m=|A|$, $L_A=\sum_{i\in A}\ell_i$, and $S_A=\sum_{i\in A}\ell_i^2$, then $\lambda$ satisfies the quadratic equation
\[
  m\lambda^2-2(L_A+\alpha m)\lambda+S_A+2\alpha L_A=0.
\]
Only roots satisfying $\ell_i<\lambda$ for $i\in A$ and $\ell_i\ge\lambda$ for $i\notin A$ are consistent with the chosen active set.  If the losses are ordered from smallest to largest, the possible active sets are exactly the first $k$ losses for $k=1,\ldots,K$.  Evaluating the original objective $F$ on the finitely many consistent candidates identifies the global minimizer.

\end{document}